\patchcmd{\section}{\scshape}{\bfseries}{}{}
\renewcommand{\@secnumfont}{\bfseries}
\theoremstyle{plain}
\DeclareMathOperator{\id}{\textrm{id}}
\theoremstyle{definition}
\newtheorem{mydef}{\textbf{Definition}}[section]
\newtheorem{myeg}[mydef]{\textbf{Example}}
\newtheorem{mythm}[mydef]{\textbf{Theorem}}
\newtheorem{rmk}[mydef]{\textbf{Remark}}
\theoremstyle{plain}
\newtheorem*{nothma}{\textbf{Theorem A}}
\newtheorem*{nothmb}{\textbf{Theorem B}}
\newtheorem*{nothmc}{\textbf{Theorem C}}
\newtheorem{lem}[mydef]{\textbf{Lemma}}
\newtheorem{pro}[mydef]{\textbf{Proposition}}
\newtheorem{cor}[mydef]{\textbf{Corollary}}
\newtheorem*{notation}{\textbf{Notation}}
\tikzset{main node/.style={circle,fill=black,draw,minimum size=0.3cm,inner sep=0pt},
}
\begin{document}

	\title{On the Hopf algebra of multi-complexes}
	
	\author{Miodrag Iovanov}
	\address{University of Iowa, IA, USA and Simion Stoilow Institute of the Romanian Academy, Bucharest, Romania}
	\curraddr{}
	\email{miodrag-iovanov@uiowa.edu}
	
	\author{Jaiung Jun}
	\address{State University of New York at New Paltz, NY, USA}
	\curraddr{}
	\email{junj@newpaltz.edu}

	\subjclass[2010]{16T30 (primary), 	05E99 (secondary).}
	\keywords{Combinatorial Hopf algebra, primitive element, graph, hypergraph, multi-complex, reconstruction conjecture, cancellation-free antipode formula.}
	
	\dedicatory{}

	\maketitle
	
\begin{abstract}
We introduce a general class of combinatorial objects, which we call \emph{multi-complexes}, which simultaneously generalizes graphs, multigraphs, hypergraphs and simplicial and delta complexes. We introduce a natural algebra of multi-complexes which is defined as the algebra which has a formal basis $\mathcal{C}$ of all isomorphism types of multi-complexes, and multiplication is to take the disjoint union. This is a Hopf algebra with an operation encoding the dissasembly information for such objects, and extends the  Hopf algebra of graphs. In our main result, we explicitly describe here the structure of this Hopf algebra of multi-complexes $H$. We find an explicit basis $\mathcal{B}$ of the space of primitives, which is of combinatorial relevance: it is such that each multi-complex is a polynomial with non-negative integer coefficients of the elements of $\mathcal{B}$, and each $b\in\mathcal{B}$ is a polynomial with integer coefficients in $\mathcal{C}$. Using this, we find the cancellation and grouping free formula for the antipode. The coefficients appearing in all these polynomials are, up to signs, numbers counting multiplicities of sub-multi-complexes in a multi-complex. We also explicitly illustrate how our results specialize to the graph Hopf algebra, and observe how they specialize to results in all of the above mentioned particular cases. We also investigate applications of these results to the graph reconstruction conjectures, and rederive some results in the literature on these questions.  
\end{abstract}
	
	\section{Introduction}

	Hopf algebras are now ubiquitous in many fields of mathematics. In combinatorics, Hopf algebras appear naturally when studying various classes of combinatorial objects, such as graphs, matroids, posets or symmetric functions. Given such a class of objects, one associates an algebra which captures combinatorial properties and constructions of the underlying class; the generators of the algebra are the isomorphism types of such objects, and basic information on these objects regarding assembly and disassembly operations, are encoded in the algebraic structure of a Hopf algebra. 
	In this respect, the product of two objects is obtained by a natural combinatorial construction which puts them together and assembles them into a new object (such as taking disjoint union), and coproduct encodes all possible ways to ``split'' the given combinatorial object into two parts in a suitable way\footnote{Strictly speaking, combinatorial Hopf algebras sometimes mean Hopf algebras satisfying certain conditions (for instance, see \cite{loday2010combinatorial} or \cite{aguiar2006combinatorial}).}. 
	Finally, one hopes to use certain algebraic properties of a Hopf algebra to return to combinatorics and obtain new information, such as new combinatorial identities.  For instance, in\cite{crapo2005free}, Crapo and Schmitt used this line of thought to give a short proof of Welsh’s conjecture on a lower bound for the number of isomorphism classes of matroids on $\{1,...,n\}$. For another example, in \cite{eppolito2018proto}, Eppolito, Jun, and Szczesny provided another interpretation of the combinatorial Hopf algebra arising from matroids as the Hall algebra associated to the category of matroids. For an introduction to Hopf algebras in combinatorics, we refer the reader to \cite{grinberg2014hopf}.
	
	When one constructs Hopf algebras by the aforementioned recipe, one usually has only explicit description of product and coproduct, which usually yields a connected, graded bialgebra. In this case, by classical Hopf algebra theory (see, for example, \cite{takeuchi1971free}), one knows that such a bialgebra has an antipode and hence is a Hopf algebra. 
	
	One of the main questions regarding any such combinatorial algebra, and which has received a lot of attention, is to explicitly find the antipode. However, the antipode formulas usually involve massive cancellations and re-groupings of terms, and hence in many cases such an antipode formula is not optimal for use in relation to certain combinatorial identities. 
	
	Recently, considerable attention has been dedicated to finding cancellation-free formulas for the antipode of various Hopf algebras. In their groundbreaking work \cite{aguiar2017hopf}, Aguiar and Ardila provided an elegant unified way to find a cancellation-free and grouping-free antipode formula for various classes of combinatorial Hopf algebras by reducing the question to the case of generalized permutahedra (or polymatroids). In \cite{bucher2018matroid}, Bucher, Eppolito, Jun, and Matherne also employed the idea of sign-reversing involution, which was introduced in \cite{benedetti2017antipodes} by Benedetti and Sagan, and provided a cancellation-free antipode formula for the matroid-minor Hopf algebra. This approach can be also used to provide a cancellation-free antipode formula for the Hopf algebras defined by Eppolito, Jun, and Szczesny in \cite{eppolito2017hopf}, arising from matroids over hyperfields as in Baker-Bowler \cite{baker2017matroids}.
	
	In contrast to the case of the antipode formulas,  primitive elements of combinatorial Hopf algebras seem to have received relatively less attention. In many interesting cases, combinatorial Hopf algebras are connected, commutative, and cocommutative. Hence, by the classical Milnor–Moore theorem, they are isomorphic to a polynomial algebra as Hopf algebras; more precisely, if $B$ is a linear basis in the space of primitives of such a Hopf algebra $H$, then $H$ is the polynomial algebra in the elements of $B$ (isomorphic to the polynomial algebra in $|B|$ variables). While the original set of combinatorial objects used to build $H$ is also often an algebraic monomial basis, those elements are not primitives, and this identification with a polynomial algebra is not a Hopf algebra isomorphism.
	
	However, the problem of determining a good basis in the space of primitives is of central importance: if one knows explicitly a basis of the space of primitive elements, one can translate combinatorial questions to questions concerning polynomial algebras in an explicit way. For instance, in \cite{schmitt1995hopf}, Schmitt studied invariants of combinatorial objects in this way. To this end, it is beneficial to know an explicit description of a basis of the space of primitive elements of a combinatorial Hopf algebra, which provides a natural (Hopf) isomorphism between combinatorial Hopf algebras and polynomial algebras. In the case of the matroid-minor Hopf algebra, Crapo and Schmitt found explicitly two bases of the space of primitive elements in \cite{crapo2008primitive} by introducing a new operation (free product) of matroids.\footnote{Note that the matroid-minor Hopf algebra is not cocommutative.}
	
	Hence, one can pose the following general problem, which asks to describe the Hopf structure of a combinatorial algebra in a meaningful way which relates to combinatorics:
	
	\vspace{.3cm}
	
	\noindent {\bf Problem.} \, {\it Given some combinatorial Hopf algebra $H$, constructed from a certain class of combinatorial objects and their assembly-disassembly operations, find the precise Hopf algebra structure of $H$ by giving: \\
		(1) a ``good" basis of the space of primitives, which relates to the original combinatorial basis in such a way that the coefficients of the algebraic base change have combinatorial significance;\\
		(2) a cancellation and grouping-free formula for the antipode in terms of the combinatorial basis.}
	\vspace{.3cm}
	
The main idea of this paper is to introduce a new very general class of combinatorial objects, which we call \emph{multi-complexes}, which include and generalize many classes of other important objects, such as graphs, hypergraphs, multigraphs, simplicial complexes, colored complexes, etc. There is an associated Hopf algebra $H_{\mathcal C}$ of isomorphism types of multi-complexes, constructed by the above recipe. In our main results, we solve the above problems for this general algebra, finding bases of combinatorial relevance for the set of primitives, as well as  grouping and multiplicity-free formulas for the antipode. As multi-complexes generalize many other structures, in particular, this result specializes and provides a solution for the above problem for all these types of combinatorial objects, such as graphs, hypergraphs and simplicial complexes, among others. 
	
Roughly speaking, a multi-complex $C$ is a finite multiset $\{A_i\}_i$ of multisets based on a finite set $n_C$ together with a partial order $\preceq$ satisfying certain conditions (see, Definition \ref{definition: multi-complex}.); intuitively, $n_C$ is ``the set of vertices'', $\{A_i\}$ is ``the set of faces'', and $\preceq$ encodes ``the gluing information'' for faces. 

The Hopf algebra of multi-complexes is constructed following the typical recipe (see, Lemma \ref{lemma: multi-complex Hopf} and the paragraph before it): it has as a formal basis (over some field) the set $\mathcal {C}$ of isomorphism types of multi-complexes. The product of $H_{\mathcal C}$ is a (suitably defined) disjoint union of multi-complexes, and coproduct of a multi-complex $C$ is obtained by splitting $C$ in all possible ways into two ``induced sub-multi-complexes (see \S \ref{Section: Hopf algebra of multi-complexes} for the precise definition and formula). The set of isomorphism types of ``connected'' multi-complexes provides a polynomial basis of this algebra. 
	
Let $C$ be a multi-complex based on a finite set $n_C$. If $D$ is a sub-multi-complex of $C$ which is also based on $n_C$, then we denote $D \preceq C$. The key observation is the consideration of the following elements in $H_\mathcal{C}$, which we introduce here:
	\[
	P_C:= \sum_{D \preceq C} \mu_P(D,C)D, 
	\]
where $\mu_P$ is the M\"obius function on the set $X_C:=\{D \mid D \preceq C\}$ considered as a poset with a partial order given by set-inclusion. We first prove the following key result. 
	
	\begin{nothma}(Proposition \ref{proposition: P_C primitive})
		Let $C$ be a connected multi-complex. Then $P_C$ is a primitive element of $H_\mathcal{C}$. Moreover, if $C=C_1\cdot C_2 \cdots C_r$ as an element of $H_\mathcal{C}$, then $P_C=P_{C_1}\cdot P_{C_2} \cdots P_{C_r}$ as an element of $H_\mathcal{C}$. 
	\end{nothma}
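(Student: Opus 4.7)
The plan is to compute $\Delta(P_C)$ directly and show that, for connected $C$, all cross-terms in the coproduct vanish by Möbius-theoretic cancellation; the multiplicativity statement will then follow from the standard product-poset factorization of the Möbius function.

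First I would analyze the poset $X_C$. Because a sub-multi-complex $D \preceq C$ is obtained (essentially) by choosing, for each distinct face $[A]$ of $C$, a multiplicity in $\{0,1,\dots,m_{[A]}\}$ where $m_{[A]}$ is the multiplicity of $[A]$ in $C$, the poset $X_C$ is isomorphic to the product of chains $\prod_{[A]}[0,m_{[A]}]$. Hence
\[
\mu_P(D,C) \;=\; \prod_{[A]} \mu_{[0,m_{[A]}]}(d_{[A]},\, m_{[A]}),
\]
which vanishes unless $D$ differs from $C$ by removing at most one copy of each distinct face, and in that case equals $(-1)^{r(D)}$ where $r(D)$ counts the distinct faces from which a copy was removed.

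Next, for primitivity, I would use the coproduct in the form $\Delta(C)=\sum_{n_C = S\sqcup T} C|_S \otimes C|_T$ (splitting the vertex set) and interchange sums:
\[
\Delta(P_C) \;=\; \sum_{n_C = S\sqcup T}\ \sum_{D \preceq C} \mu_P(D,C)\, D|_S \otimes D|_T.
\]
Fixing a splitting $n_C = S\sqcup T$, partition the distinct faces of $C$ into those with support in $S$ only ($\mathcal{F}_S$), in $T$ only ($\mathcal{F}_T$), and straddling ($\mathcal{F}_{ST}$). The choices of multiplicities in these three regions are independent; $D|_S$ depends only on the $\mathcal{F}_S$-data and $D|_T$ only on the $\mathcal{F}_T$-data; and $\mu_P$ factors accordingly. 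Thus the inner sum equals
\[
\Bigl(\sum_{E \preceq C|_S} \mu_P(E,C|_S)\,E\Bigr) \otimes \Bigl(\sum_{F \preceq C|_T} \mu_P(F,C|_T)\,F\Bigr) \cdot \prod_{[A]\in \mathcal{F}_{ST}}\bigl(1+(-1)\bigr),
\]
where the last product arises from summing over whether to keep or remove one copy of each straddling face. For connected $C$ and a nontrivial splitting (both $S$ and $T$ nonempty), connectivity forces $\mathcal{F}_{ST}\neq\emptyset$, so a factor $0$ appears and the contribution vanishes. Only the two trivial splittings survive, producing $P_C\otimes 1 + 1\otimes P_C$.

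For the multiplicativity statement, when $C = C_1\cdots C_r$ the vertex set decomposes as $n_C = n_{C_1} \sqcup \cdots \sqcup n_{C_r}$, every face of $C$ is a face of exactly one $C_i$, and a sub-multi-complex of $C$ based on $n_C$ decomposes uniquely into sub-multi-complexes of each $C_i$ based on $n_{C_i}$. Hence $X_C \cong X_{C_1}\times\cdots\times X_{C_r}$ as posets, so $\mu_P(D,C)=\prod_i \mu_P(D_i,C_i)$, and distributing the product in the definition of $P_C$ yields $P_C = P_{C_1}\cdots P_{C_r}$ directly. The main obstacle I anticipate is purely bookkeeping: making sure that the partial order $\preceq$ built into the definition of a multi-complex interacts cleanly with restriction to a vertex subset and with taking sub-multisets of faces, so that $X_C$ really is the product of chains described above and so that $D|_S$ is well-defined independently of the straddling faces removed. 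Once this is checked, the Möbius cancellation $(1+(-1))=0$ does all the work.
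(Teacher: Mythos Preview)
Your multiplicativity argument is correct and is exactly what the paper does (Lemma~\ref{lemma: mobius1} and Proposition~\ref{proposition: P_G in gen}). The primitivity argument, however, has a real gap, not just a bookkeeping issue: the poset $X_C$ is \emph{not} a product of chains indexed by ``distinct faces'', and your M\"obius formula is wrong in general. Two copies of the same underlying multiset may sit differently in the partial order $\preceq$ (see the $\Delta$-complex example in the paper where $A_4=A_5=\{1,2\}$ but only $A_4\preceq A_8$), so they are not interchangeable; and even without repetitions, downward closure ties the ``regions'' together. For the filled $2$-simplex, $X_C$ has $9$ elements, and removing the $2$-face together with one edge gives $\mu_P=0$, not $(-1)^2$. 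Consequently your three-region factorization of $X_C$ fails: a face supported in $S$ can lie below a straddling face, so choices in $\mathcal F_S$ and $\mathcal F_{ST}$ are not independent.

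The paper sidesteps all of this by never computing $\mu_P$ explicitly: it interchanges sums as you do, fixes $E=F|_X$ and $H=F|_Y$, and shows the coefficient $\alpha(E,H)=\sum_{F|_X=E,\,F|_Y=H}\mu_P(F,C)$ vanishes by downward induction on $[E\sqcup H,C]$, using only the defining identity $\sum_{E\sqcup H\preceq F\preceq C}\mu_P(F,C)=0$ (valid since connectedness makes this interval nontrivial). Your idea can in fact be repaired along different lines: $X_C$ is the lattice of order ideals of the poset $C^{>0}$ of non-singleton faces, so $\mu_P(D,C)\neq 0$ only when $C\setminus D$ is a subset $U$ of the \emph{maximal} faces of $C$, with value $(-1)^{|U|}$. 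Maximal faces have no relations among themselves, so $U$ does split freely as $U_S\sqcup U_T\sqcup U_{ST}$, $D|_S$ depends only on $U_S$, and connectedness guarantees at least one maximal straddling face, giving your $(1+(-1))$ factor. But this is a different (and subtler) argument than the one you wrote.
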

	
	Next, we show that each multi-complex $C$ is a polynomial with non-negative integer coefficients of the elements $P_{D}$: 
	
	\begin{nothmb}(Proposition \ref{proposition: inversion P} and Theorem \ref{thm: main theorem P})
		Let $C$ be a multi-complex. Then we have the following:
		\[
		C=\sum_{D \preceq C}P_D
		\]
		In particular, if $\mathcal{T}$ is the set of isomorphism classes of connected multi-complexes in $\mathcal{C}$, then the set $\{P_C\}_{C \in \mathcal T}$ forms a linear basis of the space of primitive elements of $H_{\mathcal{C}}$. 
	\end{nothmb}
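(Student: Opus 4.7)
The plan is to first derive the expansion $C=\sum_{D\preceq C}P_D$ by Möbius inversion on the finite poset $X_C$, then use the multiplicativity part of Theorem A to identify $H_{\mathcal C}$ as a polynomial Hopf algebra on $\{P_C\}_{C\in\mathcal T}$, and finally argue that these primitive generators already span the entire primitive space.

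The first identity is a purely formal consequence of the definition. Since $P_C=\sum_{D\preceq C}\mu_P(D,C)D$ holds for every $C$, classical Möbius inversion on the finite poset $X_C$ is equivalent to the identity $C=\sum_{D\preceq C}P_D$; concretely, swapping the order of summation,
\[
\sum_{D\preceq C}P_D=\sum_{E\preceq C}E\Bigl(\sum_{E\preceq D\preceq C}\mu_P(E,D)\Bigr)=\sum_{E\preceq C}[E=C]\cdot E=C.
\]
The transition matrix from the basis $\{C\}_{C\in\mathcal C}$ to $\{P_D\}_{D\in\mathcal C}$ is thus upper-unitriangular with respect to (any linear extension of) $\preceq$, so $\{P_D\}_{D\in\mathcal C}$ is also a $k$-linear basis of $H_{\mathcal C}$.

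Next I combine this with the multiplicative identity $P_C=P_{C_1}\cdots P_{C_r}$ from Theorem A, where $C_1,\dots,C_r\in\mathcal T$ are the connected components of $C$. Since the isomorphism class of $C$ is determined by the multiset of its connected components, this identifies $\{P_D\}_{D\in\mathcal C}$ precisely with the set of commutative monomials in $\{P_C\}_{C\in\mathcal T}$. Combined with the linear basis statement from the previous step, this shows that $H_{\mathcal C}$ is freely generated as a commutative algebra by $\{P_C\}_{C\in\mathcal T}$; equivalently, $H_{\mathcal C}\cong k[\{P_C\}_{C\in\mathcal T}]$ as algebras.

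Finally, each $P_C$ with $C\in\mathcal T$ is primitive by Theorem A, and the main obstacle is to show that these already span all of $\mathrm{Prim}(H_{\mathcal C})$. The cleanest route is to invoke the Cartier--Milnor--Moore theorem: $H_{\mathcal C}$ is a connected graded, commutative and cocommutative Hopf algebra over a field of characteristic zero, hence $H_{\mathcal C}\cong\mathrm{Sym}(\mathrm{Prim}(H_{\mathcal C}))$ as Hopf algebras, and combined with the polynomial presentation above this forces $\mathrm{Prim}(H_{\mathcal C})=\mathrm{span}_k\{P_C:C\in\mathcal T\}$. A more direct alternative is to write an arbitrary primitive $p=\sum c_M M$ in the monomial basis of $k[\{P_C\}_{C\in\mathcal T}]$, compute $\Delta(M)=\prod_i(P_{C_i}\otimes 1+1\otimes P_{C_i})^{a_i}$ using the primitivity of the generators, and match mixed-bidegree components of $\Delta(p)$ against $p\otimes 1+1\otimes p$; the resulting triangular linear system forces $c_M=0$ for every monomial $M$ of degree $\geq 2$, yielding the claim.
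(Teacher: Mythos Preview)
Your proof is correct and follows the same essential strategy as the paper: M\"obius inversion for the identity $C=\sum_{D\preceq C}P_D$, and the observation that primitive algebra generators of a polynomial Hopf algebra must span the primitive space. There are a couple of organizational differences worth noting. First, you prove the inversion formula directly for arbitrary $C$ by a single M\"obius-inversion computation, whereas the paper first treats the connected case (Lemma~\ref{lemma: connected inversion1}) and then deduces the general case from the multiplicativity $P_{C_1\sqcup C_2}=P_{C_1}P_{C_2}$; your route is shorter and does not actually require multiplicativity for this step. Second, for the basis statement you package linear independence and spanning together by first showing that $\{P_D\}_{D\in\mathcal C}$ is a linear basis via unitriangularity and then identifying this set with the monomials in $\{P_C\}_{C\in\mathcal T}$, thereby obtaining the polynomial presentation in one stroke; the paper instead proves linear independence separately by a leading-term argument in the size grading. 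Your invocation of Cartier--Milnor--Moore is a legitimate shortcut, and your ``direct alternative'' is exactly the argument the paper sketches parenthetically. One small point of phrasing: saying the transition matrix is unitriangular ``with respect to any linear extension of $\preceq$'' is slightly imprecise, since $\preceq$ is only defined within each $X_C$; what makes the argument work globally is the grading by size $|C|$, which the paper also uses.
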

	
	Finally, we utilize the above results to find a cancellation-free and grouping-free antipode formula for the Hopf algebra of multi-complexes, as well as cancellation-free formulas for the transformations between the basis $\{C\}_{C \in \mathcal{T}}$ of $H_{\mathcal{C}}$ and $\{P_C\}_{C \in \mathcal{T}}$. They are given in terms of multiplicity numbers $[C:D]$, defined for every arbitrary pair of multi-complexes $C,D$, to be the number of sub-multi-complexes of $C$ isomorphic to $D$. The following statement summarizes below all these results, given in \S \ref{section: Applications}.
	
	\begin{nothmc}
		
		For any multi-complex $C$, the following hold. 
		\begin{eqnarray*}
			C & = & \sum_{\stackrel{D_1,D_2,\dots,D_t\,{\rm connected}}{n_C=n_{D_1}\sqcup n_{D_2} \sqcup \cdots \sqcup n_{D_t}}} [C:D_1\dots D_t]P_{D_1}\ldots P_{D_t} \\
			P_C & = & \sum_{\stackrel{D_1,D_2,\dots,D_t\,{\rm connected}}{n_C=n_{D_1}\sqcup \cdots \sqcup n_{D_t}}} \mu_P(D_1\sqcup\cdots \sqcup D_t,C) [C:D_1\dots D_t]D_1\cdot \ldots \cdot D_t \\
			S(C) & = & \sum_{D \preceq C} (-1)^{c_{D}}P_{D} = \sum_{\stackrel{D_1,D_2,\dots,D_t\,{\rm connected}}{n_C=n_{D_1}\sqcup n_{D_2} \sqcup \cdots \sqcup n_{D_t}}} (-1)^t[C:D_1\dots D_t]D_1\cdot \ldots \cdot D_t,
		\end{eqnarray*}
	where $c_{D}$ is the number of connected components of the sub-multi-complex $D$ of $C$. 
	\end{nothmc}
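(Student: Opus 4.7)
My plan is to derive all three identities from Theorems~A and~B together with the fact that $H_{\mathcal C}$ is a commutative Hopf algebra, via a common regrouping argument indexed by the connected-component decomposition of sub-multi-complexes $D \preceq C$. For the first identity, I would start from $C=\sum_{D\preceq C}P_D$ (Theorem~B); each such $D$ has a unique decomposition into its connected components $D_1,\ldots,D_t$, whose vertex sets partition $n_C$, and by Theorem~A we have $P_D=P_{D_1}\cdots P_{D_t}$. Collecting this sum by the isomorphism pattern of the tuple $(D_1,\ldots,D_t)$ and using that, by its definition, $[C:D_1\cdots D_t]$ counts exactly the sub-multi-complexes $D\preceq C$ giving rise to a given pattern, yields the stated polynomial expression.

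For the second identity I would begin with the definition $P_C=\sum_{D\preceq C}\mu_P(D,C)D$ and apply the same grouping by isomorphism class of $D$. The one auxiliary fact needed is iso-invariance of the M\"obius value: if $D,D'\preceq C$ have the same connected-component pattern, the principal filters $[D,C]$ and $[D',C]$ in $X_C$ are isomorphic as posets, so $\mu_P(D,C)=\mu_P(D',C)$, and the common value can be pulled out of each iso class. Multiplying by the multiplicity $[C:D_1\cdots D_t]$ then gives the formula.

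For the antipode, the first expression is immediate: applying $S$ to $C=\sum_{D\preceq C}P_D$ and using that $S$ is an anti-algebra homomorphism of a commutative Hopf algebra with $S(\pi)=-\pi$ on any primitive $\pi$, one computes
$$S(P_D)=S(P_{D_1}\cdots P_{D_{c_D}})=(-1)^{c_D}P_{D_{c_D}}\cdots P_{D_1}=(-1)^{c_D}P_D,$$
so $S(C)=\sum_{D\preceq C}(-1)^{c_D}P_D$. The second expression is then obtained either by substituting the definition of each $P_D$ into the first form and regrouping exactly as in the proof of the second identity, or, equivalently, by applying $S$ to formula~(i) and converting products of primitives to products of multi-complexes via Theorem~A; either route yields the claimed multiplicity-weighted formula in the combinatorial basis. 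The main obstacle I foresee is the careful bookkeeping in the regrouping step: the indexing of the sums by a partition of $n_C$ together with a choice of connected multi-complex on each block, and the precise interpretation of $[C:D_1\cdots D_t]$ (including how symmetries among the $D_i$ enter the count), must be handled consistently so that the multiplicities line up between the abstract iso-type grouping and the concrete product in $H_{\mathcal C}$.
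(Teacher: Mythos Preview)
Your approach coincides with the paper's: it proves the first antipode expression exactly as you propose (apply $S$ to $C=\sum_{D\preceq C}P_D$ and use $S(P_{D_i})=-P_{D_i}$ on primitives; this is Proposition~\ref{proposition: antipode}), and it obtains all three regrouped formulas by the same collecting of the sums $\sum_{D\preceq C}(\cdots)$ according to the isomorphism type of the connected-component decomposition of $D$, with the multiplicity $[C:D_1\cdots D_t]$ absorbing the count. The iso-invariance of $\mu_P(D,C)$ that you single out for the second identity is likewise used by the paper, though only implicitly.
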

We note that the above basis $\{P_C\}_{C \in \mathcal{T}}$ satisfies the requirements of the general problem posed above; in fact, we prove that $\{P_C\}_{C \in \mathcal{T}}$ satisfies certain minimality property and a universal property among the bases satisfying two natural conditions, which one might regard as combinatorial bases. 

	
	We also recover some known facts for on the graph reconstruction conjectures using our framework. Finally, we note that our main ideas for finding such a bases, and consequently, cancellation-free formulas for the antipode, might very well be suitable to application in other combinatorial contexts. 
	
	This paper is organized as follows. In the interest of a wider audience, in \S \ref{section: Preliminaries}, we review basic definitions for the graph Hopf algebra; familiarity with Hopf algebra theory is not required. In \S \ref{Section: Hopf algebra of multi-complexes}, we define multi-complexes and the Hopf algebra of multi-complexes. In \S \ref{Section: primitive for multi-complexes}, we introduce a basis for the space of primitive elements of the Hopf algebra of multi-complexes, and study basic properties, and give our main results. In \S \ref{section: multi-complexes of dimension at most $1$}, we specialize our results to the case of multi-complexes of dimension at most $1$ and obtain several results which directly imply the such formulas for  for graphs, multigraphs and hypergraphs. In \S \ref{section: Applications}, we give a few applications of our explicit description of a primitive basis of the Hopf algebra of multi-complexes.

	\vspace{0.5cm}
	\noindent{\bf Acknowledgements:} J.J. received support from an AMS-Simons travel grant, and thanks Jaehoon Kim for pointing out that the authors' argument in the previous version for graphs can be used to obtain the same results for hypergraphs. M.C.I was supported by the Simons Collaboration Grant 637866 for several research visits to J.J.

	\section{Preliminaries} \label{section: Preliminaries}
	
	In what follows, all graphs are assumed to be finite unless otherwise stated. For a graph $G$, we let $E(G)$ be the set of edges of $G$ and $V(G)$ the set of vertices of $G$. For each subset $E$ of $E(G)$, we let $G- E$ be the graph which we obtain from $G$ by deleting edges in $E$ while we keep the same vertex set. Finally, for each finite set $A$, we let $|A|$ be the cardinality of $A$.
	
	Let $\mathcal {G}$ be the set of isomorphism classes of graphs. One can naturally impose a (commutative) monoid structure on $\mathcal{G}$ as follows: for $[G_1],[G_2] \in \mathcal{G}$, 
	\[
	[G_1]\cdot [G_2] := [G_1\sqcup G_2], 
	\]
	where $[G_i]$ is the isomorphism class of $G_i$ and $G_1 \sqcup G_2$ is the disjoint union of $G_1$ and $G_2$. In particular, the \emph{empty graph} $[\emptyset]$ becomes the identity. We will interchangeably use $[G]$ and $G$ to denote the isomorphism class of $G$ when there is no possible confusion.
	
	Let $k$ be a field of characteristic zero and $k[\mathcal{G}]$ be the monoid algebra obtained by considering $\mathcal{G}$ as a monoid as above. Clearly $k[\mathcal{G}]$ is graded by the number of vertices of graphs. 
	
	The algebra $k[\mathcal{G}]$ has a Hopf algebra structure, which we recall here. Let $E$ be a finite set and $A$ be a subset of $E$. For the notational convenience, we let $\overline{A}:=E-A$.  The coproduct $\Delta: k[\mathcal{G}] \longrightarrow k[\mathcal{G}] \otimes_k k[\mathcal{G}]$ is defined for each $G\in \mathcal{G}$ by  
	\begin{equation}\label{eq: coproduct1}
	\Delta(G):= \sum_{T \subseteq V(G)} G_T \otimes G_{\overline{T}},
	\end{equation}
	where $G_T$ is the induced subgraph (induced by $T$) of $G$. The above formula is linearly extended to $k[\mathcal{G}]$. We further define the counit $\varepsilon:k[\mathcal{G}] \to k$ by letting it be defined on the basis $\mathcal{G}$ as
	\begin{equation}\label{eq: counit}
	\varepsilon(G):=\begin{cases}
	1, \textrm{ if } V(G) = \emptyset, \\
	0, \textrm{ if } V(G) \neq \emptyset.
	\end{cases}
	\end{equation}
	
	With the above coproduct $\Delta$ and the counit $\varepsilon$, $k[\mathcal{G}]$ becomes a connected, graded bialgebra and hence a Hopf algebra, which we will denote by $H_\mathcal{G}$. This Hopf algebra is cocommutative and commutative, and by a classical Hopf algebra theorem (for example, the more general Cartier-Kostant-Milnor-Moore theorem), it is isomorphic, as a Hopf algebra, to a polynomial algebra in (necessarily) countably many variables 
	(see, for example also \cite{grinberg2014hopf} or \cite{schmitt1994incidence}). In what follows, we aim to find special explicit bases for the primitives of $k[\mathcal{G}]$ and more general combinatorial Hopf algebras.
	
	\begin{rmk}
		We note that formulas for the antipode of $H_\mathcal{G}$ have been obtained by many authors; see for instance  Humpert and Martin \cite{humpert2012incidence},  Benedetti and Sagan \cite{benedetti2017antipodes}, and  Aguiar and Ardila \cite{aguiar2017hopf}.
	\end{rmk}

\section{The Hopf algebra of Multi-complexes} \label{Section: Hopf algebra of multi-complexes}

In this section, we first introduce the notion of multi-complexes which simultaneously generalizes several classes of combinatorial objects, including graphs, hypergraphs, simplicial sets, and simplicial complexes. 
	
For a multiset $X$, we let $\textrm{supp}(X)$ be the set of elements in $X$ without repetitions. That is, a multiset $X$ can be regarded as a function $X:S\rightarrow \mathbb{N}=\{1,2,3,\dots\}$; then we denote $S=\textrm{supp}(X)$, and we say that the multiset $X$ is based on $S$. For instance, if $X=\{a,a,b,c,c,c\}$, then $\textrm{supp}(X)=\{a,b,c\}$. 

\begin{mydef}\label{definition: multi-complex}
	By a \emph{multi-complex} $C$, we mean a finite family $\{A_i\}$ of (possibly repeating) non-empty multisets based on some subset of a finite  set $n_C:=\{1,2,...,n\}$, together with a partial order $\preceq$ such that
	\begin{enumerate}
		\item
		The singletons $\{k\}$ appear among the $A_i$'s exactly once, and $\{k\}\preceq A_i$ if and only if $k$ belongs to $A_i$. 
		\item 
		If $A_i \preceq A_j$, then $A_i$ is contained in $A_j$. 
	\end{enumerate}
	The empty collection $\emptyset$ is the only multi-complex based on the empty set $\emptyset$.
\end{mydef}

\begin{mydef}
	Let $C=\{A_i\}_{i \in I}$ be a multi-complex based on $[n]:=\{1,2,\ldots,n\}$. By a \emph{sub-multi-complex}, we mean a multi-complex $D=\{A_{j}\}_{j \in J}$ for some subset $J \subseteq I$ which is closed under comparison in $C$ in the following sense:
	\[
	\textrm{ If } A_i \preceq A_j \textrm{ and } A_j \in D,\textrm{ then } A_i \in D.
	\]
\end{mydef}

\begin{myeg}[{\bf Simple Graphs}] \label{example: graph complex}
	Let $G$ be a simple graph with the set $V(G)$ of vertices and the set $E(G)$ of edges. Then, we define the multi-complex $C_G$ as $C_G=V(G)\cup E(G)$, and $\preceq$ is given for $v\in V(G)$ and $e\in E(G)$ by $\{v\}\preceq \{e\}$ if the edge $e$ contains vertex $v$. In particular, $C_G$ is based on the finite set $V(G)$. Clearly, $C_G$ uniquely determines $G$ up to isomorphism. 
\end{myeg}

\begin{myeg}[{\bf Multigraphs}]\label{example: multi-graph complex}
    Let $G$ be a multigraph (so multiple edges and loops are allowed). For each unordered pair of vertices $(a,b)$ consider a number of multisets $A_{i}(a,b)=\{a,b\}$ equal to the number of arrows between $a$ to $b$. Then the collection of multisets $\{A_i(a,b)\}_{(a,b)}\cup \{a\}_{a\in V(G)}$ is a multi-complex. For a specific example, the graph \vspace{.5cm}
    $$\xymatrix{\bullet \ar@(dr,dl)@{-}[] \ar@(ur,ul)@{-}[] 
    \ar@/^{1pc}/@{-}[r]  \ar@/_{1pc}/@{-}[r] & \bullet} 
    \vspace{.5cm} $$ yields the multi-complex $A_1=\{1\},  A_2=\{2\}, A_3=\{1,1\}, A_4=\{1,1\}, A_5=\{1,2\}, A_6=\{1,2\}$ with $\preceq$ defined minimally to fulfill the definition (i.e. there are no relations between sets $A_3,A_4,A_5,A_6$). It is clear here why in the definition of multi-complex we allow for the $A_i$ to be multisets, which can themselves repeat. 
\end{myeg}

\begin{myeg}[{\bf Hypergraphs}]\label{example: hyper-graph complex}
    Similarly as above, if  $G$ is a hypergraph with vertex set $V(G)$ and edge set $E(G)$, the same definition as in the previous example $C_G=V(G)\cup E(G)$ yields a multi-complex $C_G$.    
\end{myeg}

\begin{myeg}[{\bf Simplicial Complexes and $\Delta$-Complexes}]\label{example: simplicial complex}
Let $S$ be a finite abstract simplicial complex, and $S_n$ the set of simplices of dimension $n$. We let $C_S=\bigcup\limits_n S_n$ to be the set of all simplices, and $\preceq$ be defined as $A\preceq B$ if $A\subseteq B$ (so $A$ is a face of the simplex $B$). It is clear that the multi-complex $C_S$ uniquely recovers $S$ up to isomorphism. Similarly and more generally, a finite $\Delta$-complex $X$ can be regarded as a multi-complex by considering the poset of simplices of $X$ as follows. Let $[n]=\{1,2,...,n\}$ be the set of 0-simplices of $X$ and we let $C_X$ be the collection of all simplices of $X$. For $A,B\in C_X$ we let $A\preceq B$ if $A$ is a face of $B$. Since the structure of $X$ is combinatorially completely determined by this information, then $X$ is determined by $C_X$. (This covers also the case of multigraphs above.) 
\end{myeg}

\begin{myeg}
	Suppose that $C$ is a multi-complex based on $\{1,2,3\}$ with $A_1=\{1\}$, $A_2=\{2\}$, $A_3=\{3\}$, $A_4=\{1,2\}$, $A_5=\{1,2\}$, $A_6=\{2,3\}$, $A_7=\{1,3\}$, $A_8=\{1,2,3\}$ together with the partial order $\preceq$ given by inclusion except for that we require $A_5$ and $A_8$ are not comparable. This models the Delta-complex given by one 2-simplex (full triangle) and one additional edge between vertices $1$ and $2$, as depicted in the picture below.
	$$\xymatrix{\bullet \ar@{-}[d]\ar@(ur,ul)@{-}[d]\ar@{-}[dr]_{{}_{=}} & \\ \bullet\ar@{-}[r] & \bullet} $$
\end{myeg}

\begin{myeg}[{\bf Colored simplicial complexes}]\label{example: colored simplicial complexes}
    By a \emph{colored simplicial complex} or \emph {labeled simplicial complex} we mean a simplicial complex $S$ together with a coloring of its faces (or label attached to each of its faces), that is, a function $f:F(S)\rightarrow \mathbb{N}$, where we fix a countable set of colors $\mathbb{N}=\{0,1,2,...\}$. We note that there are no relations imposed between color of faces and their sub-simplices. We can associate a unique multi-complex $C=C(S,f)=(A_i)_i$ to each such colored simplicial complex as follows. Given $S$ and $f$, first let $n_C$ be the set of 0-simplices of $S$. For each $a\in n_C$, we add a number of new sets $A_i=\{a,a\}$ equal to $f(a)$. Intuitively, at each vertex, we add a number of "loops" that will encode the color number of that vertex. Continuing, for each face $E=(a_1,...,a_t)$ of $S$, we add a number of sets $A_j=\{a_1,...,a_t\}$ equal to $f(E)$. We let the relation between the $A_i$'s be containment. One can easily see that each colored simplicial complex yields a uniquely well defined multi-complex. 
\end{myeg}

\begin{mydef}\label{definition: order of multi-complexes}
	Let $C = \{A_i\}$ and $D=\{B_j\}$ be multi-complexes based on $[n]=\{1,...,n\}$ and $[m]=\{1,...,m\}$ respectively. A \emph{morphism} from $C$ to $D$ is a function $f:[n] \to [m]$ such that for each $i$, $f(A_i) \in \{B_j\}$ and such that $f$ preserves partial orders. 
\end{mydef}

Next, we introduce two important operations for multi-complexes, namely, disjoint union and restriction. 

\begin{mydef}
    Let $C = \{A_i\}$ and $D=\{B_j\}$ be multi-complexes based on $[n]$ and $[m]$ respectively.
	The \emph{disjoint union} $C \sqcup D$ of $C$ and $D$ is the multi-complex based on the set $[n] \sqcup [m] = [n+m]$ with the collection $\{A_i\} \sqcup \{B_j\}$ of multisets, together with a partial order induced from $C$ and $D$.
\end{mydef}

It is straightforward to check that this is indeed a multi-complex. We also define  the intersection of sub-multi-complexes which will be needed later. 

\begin{mydef}\label{definition: intersection}
Let $C=\{A_i\}$ be a multi-complex based on $[n]$, and suppose that $D=\{A_i \mid i\in F\}$ $E=\{A_i \mid i\in H\}$ are sub-multi-complexes of $C$. The \emph{intersection} $D\cap E$ is the multi-complex:
\[
D \cap E :=\{A_i  \mid i \in F\cap H\},
\]
and with partial order inherited from $C$. 
\end{mydef}

It is not difficult to note that indeed the above defines a multi-complex. 

\begin{mydef}
	Let $C=\{A_i\}$ be a multi-complex based on $[n]$. Let $X$ be a subset of $[n]$. We let the \emph{restriction} $C|_X$ be the multi-complex based on $X$ and consisting of $A_i$ in $C$ such that $\textrm{supp}(A_i)\subseteq X$, and with partial order inherited from $C$, so $A_i \preceq A_j$ in $C|_X$ if and only if $A_i \preceq A_j$ in $C$.  
\end{mydef}

One can easily see that the structure defined above is indeed a multi-complex.

\begin{myeg}\label{example: multi graph disjoint union}
	Let $G_1$ and $G_2$ be graphs. Then one has
	\[
	C_{G_1 \sqcup G_2} = C_{G_1} \sqcup C_{G_2},
	\]
	where $C_G$ is the multi-complex associated to a graph $G$.
\end{myeg}

\begin{myeg}\label{example: multi graph restriction}
	Let $C=C_G$ be the multi-complex associated to a graph $G$; in particular $C_G$ is based on the set $V(G)$. For any $X \subseteq V(G)$, one can easily see that
	\[
	C|_X=C_{G|_X}, 
	\]
	where $G|_X$ is the restriction of $G$ to $X$. 
\end{myeg}

Next, we introduce some definition useful in decomposing  a muti-complex as a disjoint union of ``connected components''.

\begin{mydef}\label{definition: path-component}
	Let $C$ be a multi-complex based on $[n]$.
	\begin{enumerate}
		\item 
		We say that $a,b \in [n]$ are \emph{path-connected} (with respect to $C$) if there exist $A_1,\dots,A_l \in C$ and $a_1,\dots,a_{l-1} \in [n]$ such that $\{a,a_1\} \subseteq \textrm{supp}(A_1)$, $\{a_1,a_2\} \subseteq \textrm{supp}(A_1)$, $\dots$, $\{a_{l-1},b\} \subseteq \textrm{supp}(A_l)$.
		\item 
		For $a \in [n]$, we let $X_a$ be the set of $b\in[n]$ which are path-connected to $a$. We define the \emph{connected component} of $a$ to be the multi-complex $C[a]=C|_{X_a}$. 
	\end{enumerate} 
\end{mydef}

\begin{mydef}
	Let $C$ be a multi-complex based on $[n]$. We say that $C$ is \emph{connected} if any two elements $a,b \in [n]$ are path-connected. 
\end{mydef}


\begin{myeg}
	Let $C_G$ be the multi-complex associated to a graph $G$. For $a \in V(G)$, we let $G_a$ be the (graph-theoretically) connected component of $G$ containing $a$. Then, the connected component of $a$ (as a multi-complex), where $V(G)$ is considered as the ground set of $C_G$, is just the multi-complex associated to $G_a$. 
\end{myeg}

Let $C$ be a multi-complex based on $E$. For any $a,b \in E$, we let $a \sim_C b$ if and only if $a$ and $b$ are path-connected with respect to $C$. Then one can easily see that $\sim_C$ is an equivalence relation directly by the definition. Fix a set $R_C$ of distinct representatives of $\sim_C$. For each $r \in R_C$, we note that $C[r]$, the connected component of $r$, is a connected multi-complex, and we clearly have the following.


\begin{lem}
	With the same notation as above, we have that
	\[
	C= \bigsqcup_{r \in R_C}C[r].
	\]
\end{lem}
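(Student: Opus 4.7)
The plan is to verify that the right-hand side $\bigsqcup_{r\in R_C} C[r]$ is literally the same multi-complex as $C$, by matching (i) the underlying base set, (ii) the family of multisets, and (iii) the partial order.

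First I would check the base set. Since $\sim_C$ is an equivalence relation on $E$, the sets $X_r = \{b \in E : b \sim_C r\}$ for $r \in R_C$ form a partition of $E$. Thus the base set of $\bigsqcup_{r\in R_C} C[r]$ is $\bigsqcup_r X_r = E$, matching the base set of $C$.

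Second, and this is the key point, I would show that every multiset $A_i \in C$ lies in exactly one restriction $C|_{X_r}$. If $\textrm{supp}(A_i) = \{a_1,\ldots,a_k\}$, then taking the single multiset $A_i$ itself (with the trivial chain of intermediate vertices) directly from the definition of path-connectedness, any two elements $a_j, a_\ell \in \textrm{supp}(A_i)$ are path-connected. Hence $\textrm{supp}(A_i)$ is entirely contained in a single equivalence class $X_r$, so $A_i$ appears in $C|_{X_r}$ and in no other $C|_{X_{r'}}$. Singletons $\{a\}$ with $a \in X_r$ likewise appear in exactly one $C|_{X_r}$, namely the one for the representative of $a$'s class. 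Thus the family of multisets in $\bigsqcup_r C[r]$ equals that of $C$.

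Finally, the partial order on each $C[r] = C|_{X_r}$ is by definition inherited from $C$, and the partial order on a disjoint union is the union of the component partial orders with no new comparisons across components. It only remains to note that no $A_i \preceq A_j$ in $C$ can cross components: by the second condition in Definition \ref{definition: multi-complex}, $A_i \preceq A_j$ implies $\textrm{supp}(A_i) \subseteq \textrm{supp}(A_j)$, which by the previous paragraph forces both to lie in the same $X_r$. Therefore the partial orders agree, and the equality $C = \bigsqcup_{r \in R_C} C[r]$ holds. I do not anticipate a main obstacle here; the only subtle observation is that a single multiset $A_i$ already path-connects all elements of its support, which is why no $A_i$ can straddle two components.
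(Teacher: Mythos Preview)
Your proof is correct and is precisely the natural verification one would expect. The paper itself provides no proof for this lemma; it simply precedes the statement with ``and we clearly have the following'' and leaves the verification to the reader. Your argument fills in exactly the details the paper omits, so there is nothing to compare against --- your approach is the intended one.
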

\bigskip

Now, we can define the Hopf algebra $H_\mathcal{C}$ of multi-complexes by using the following recipe:
\begin{enumerate}
	\item 
	We let $\mathcal{C}$ be the set of isomorphism classes of multi-complexes. Then, $\mathcal{C}$ has a natural monoid structure via disjoint union. For each isomorphism class $[C]$ of $C$, we simply write $C$ unless there is any possible confusion. 
	\item 
	Let $H_\mathcal{C}:=k[\mathcal{C}]$ be the monoid algebra over a field $k$ of characteristic zero. We will write $C\cdot D=C\sqcup D$ (or more precisely, $[C]\cdot [D]=[C\sqcup D]$). Then $H_\mathcal{C}$ is graded; for each $C$, the grading of $C$ is the number $|n_C|$, the cardinality of the base set of $C$.
	\item 
	The coproduct on $H_\mathcal{C}$ is the (usual) sum over all partitions of $n_C$ of induced multi-complexes, that is,
	\begin{equation}\label{eq: coproduct}
	\Delta(C):= \sum_{X\sqcup Y =n_C} C|_X \otimes C|_Y, 
	\end{equation}
	where $C|_X$ is the restriction of $C$ to $X$. 
	\item 
	The counit $\varepsilon: H_\mathcal{C} \to k$ is defined on each $C \in \mathcal{C}$ as $\varepsilon(C) =1$ if $C$ is based on the empty set, and $\varepsilon(C)=0$ otherwise, then linearly extended to all of $H_\mathcal{C}$. 
\end{enumerate}

\begin{lem}\label{lemma: multi-complex Hopf}
	With the same notation as above, $H_\mathcal{C}$ is a Hopf algebra. 
\end{lem}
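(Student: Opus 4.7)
The plan is to verify that $H_\mathcal{C}$ is a connected graded bialgebra; once established, the existence of an antipode is automatic by a classical result of Takeuchi \cite{takeuchi1971free}, which gives a Hopf algebra. The algebra structure is immediate since $H_\mathcal{C}$ is defined as the monoid algebra on $\mathcal{C}$ under disjoint union, with unit the (class of the) empty multi-complex. It is graded by $|n_C|$, and the degree-zero component is spanned by the empty multi-complex alone, so $H_\mathcal{C}$ is connected. The remaining work is therefore to check (i) coassociativity, (ii) the counit axiom, (iii) multiplicativity of $\Delta$, and (iv) multiplicativity of $\varepsilon$.

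For coassociativity, I would expand both $(\Delta\otimes \id)\Delta(C)$ and $(\id\otimes \Delta)\Delta(C)$ and observe that, after unraveling the definition, each equals the triple sum
\[
\sum_{X \sqcup Y \sqcup Z = n_C} C|_X \otimes C|_Y \otimes C|_Z.
\]
This reduces to the elementary fact that iterated restriction behaves as expected: $(C|_{X \sqcup Y})|_X = C|_X$, which is immediate from the definition of restriction since the supports used to select the $A_i$'s only depend on the final subset. The counit axiom follows from the observation that $C|_\emptyset = \emptyset$ and $C|_{n_C} = C$, so the only nonvanishing term after applying $\varepsilon$ on either tensor factor is the one corresponding to the partition $n_C = \emptyset \sqcup n_C$ (resp.\ $n_C \sqcup \emptyset$).

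Verifying that $\Delta$ is an algebra map, that is, $\Delta(C \cdot D) = \Delta(C) \cdot \Delta(D)$, is the main technical point. The key observation is that any decomposition $W \sqcup W' = n_{C} \sqcup n_D = n_{C \sqcup D}$ uniquely splits as $W = X \sqcup U$, $W' = Y \sqcup V$ with $X \sqcup Y = n_C$ and $U \sqcup V = n_D$, and moreover
\[
(C \sqcup D)|_{W} = C|_X \sqcup D|_U, \qquad (C \sqcup D)|_{W'} = C|_Y \sqcup D|_V,
\]
which follows from the fact that each $A_i$ of $C\sqcup D$ comes either from $C$ or from $D$, so its support is contained in $n_C$ or in $n_D$. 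This gives a bijection between the partitions appearing in $\Delta(C \cdot D)$ and the pairs of partitions indexing $\Delta(C) \cdot \Delta(D)$, matching the corresponding terms. Multiplicativity of $\varepsilon$ is trivial since $n_{C \sqcup D} = \emptyset$ if and only if both $n_C$ and $n_D$ are empty.

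The hardest part will be (iii): one must be careful that the notion of sub-multi-complex obtained by restriction genuinely factors through disjoint unions, including when the $A_i$'s are multisets (so repeated faces remain consistently distributed). Once these compatibilities are checked, the bialgebra axioms hold, and the classical argument producing the antipode via the convolution inverse of the identity on a connected graded bialgebra (or equivalently, the graded Takeuchi formula) completes the proof.
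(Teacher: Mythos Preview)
Your approach is essentially the same as the paper's: verify the bialgebra axioms directly from the definitions and then invoke the standard fact that a connected graded bialgebra admits an antipode. In fact you are more thorough than the paper, which explicitly checks only coassociativity and the counit axiom and leaves the multiplicativity of $\Delta$ and $\varepsilon$ implicit; your verification of $\Delta(C\cdot D)=\Delta(C)\cdot\Delta(D)$ via the splitting $W=X\sqcup U$, $W'=Y\sqcup V$ is correct and fills that gap.
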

\begin{proof}
	We first show that $H_\mathcal{C}$ is bialgebra, that is, we show that the following diagrams commute. 
	\[
	\begin{tikzcd}[column sep=large]
	H_\mathcal{C}\otimes_kH_\mathcal{C} \arrow{r}{\Delta \otimes \id}
	&H_\mathcal{C} \otimes_kH_\mathcal{C}\otimes_kH_\mathcal{C} \\
	H_\mathcal{C} \arrow{r}{\Delta} \arrow{u}{\Delta} & H_\mathcal{C}\otimes_kH_\mathcal{C}
	\arrow{u}{\id \otimes \Delta},
	\end{tikzcd} \quad
	\begin{tikzcd}[column sep=large]
	H_\mathcal{C}\otimes_k H_\mathcal{C}\arrow{r}{\varepsilon \otimes \id}
	& k\otimes_k H_\mathcal{C} \\
	H_\mathcal{C} \arrow{r}{\id} \arrow{u}{\Delta} & H_\mathcal{C} \arrow{u}{\simeq}
	\end{tikzcd}
	\]
	The first diagram clearly commutes since we have, for any $C \in H_\mathcal{C}$ based on $[n]$, 
	\[
	(\Delta \otimes \id)\circ \Delta (C)=\sum_{X\sqcup Y \sqcup Z =[n]} C|_X \otimes C|_Y \otimes C|_Z = \Delta \circ(\Delta \otimes \id)(C). 
	\]
	For the second diagram, suppose that $C$ is based on the empty set, i.e. $C=\emptyset$. Then we have 
	\[
	(\varepsilon \otimes \id)\circ \Delta (C) = (\varepsilon \otimes \id)(C \otimes C )=1 \otimes C,  
	\]
	and hence the second diagram also commutes. Furthermore clearly, $H_\mathcal{C}$ is a connected and graded bialgebra, showing that $H_\mathcal{C}$ is a Hopf algebra. 
\end{proof}

The following statement is obvious from the above definitions of the Hopf algebra of multi-complexes.

\begin{lem}\label{lemma: sub Hopf}
    Let $\mathcal{E}$ be a set of multi-complexes which is closed under the operations of taking disjoint union and restriction. Then the subalgebra $H_{\mathcal{E}}$ of $H_{\mathcal{C}}$ generated (and spanned) by the set $\mathcal{E}$ is a Hopf subalgebra of $H_{\mathcal{E}}$. 
\end{lem}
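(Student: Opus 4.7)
The plan is to verify the standard checklist that makes a subalgebra into a Hopf subalgebra: closure under multiplication and unit (automatic from the subalgebra hypothesis), closure under comultiplication and counit, and closure under the antipode. The main point will be the comultiplication; the antipode will then come essentially for free from general theory of connected graded bialgebras.

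First I would note that because $\mathcal{E}$ is closed under disjoint union, and the product in $H_\mathcal{C}$ is disjoint union of multi-complexes, the subalgebra generated by $\mathcal{E}$ is already linearly spanned by $\mathcal{E}\cup\{1\}$: every product $C_1\cdot C_2\cdots C_r$ of elements of $\mathcal{E}$ is itself an element of $\mathcal{E}$. This both justifies the parenthetical ``and spanned'' in the statement and reduces every subsequent verification to checking things on a single basis element $C\in\mathcal{E}$ (together with the obvious case of $1$, the empty multi-complex).

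Next, to show $\Delta(H_\mathcal{E})\subseteq H_\mathcal{E}\otimes H_\mathcal{E}$, it suffices by linearity to check this on a single $C\in\mathcal{E}$. From the coproduct formula (\ref{eq: coproduct}), we have
\[
\Delta(C)=\sum_{X\sqcup Y=n_C}C|_X\otimes C|_Y,
\]
and since $\mathcal{E}$ is by hypothesis closed under restriction, each $C|_X$ and each $C|_Y$ lies in $\mathcal{E}$, so the whole sum lies in $H_\mathcal{E}\otimes H_\mathcal{E}$. The counit $\varepsilon$ takes values in $k$, which is already contained in $H_\mathcal{E}$ via the unit $1$, so there is nothing to check there. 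Hence $H_\mathcal{E}$ is a graded sub-bialgebra of $H_\mathcal{C}$.

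Finally, for the antipode, I would invoke the general fact that in a connected graded bialgebra the antipode is uniquely determined by the bialgebra structure via the recursion $S(x)=-x-\sum S(x_{(1)})x_{(2)}$ on elements of positive degree, applied inductively on the grading (this is the classical argument recalled, for instance, in \cite{takeuchi1971free}). Since $H_\mathcal{E}$ is itself connected (degree zero component equal to $k\cdot 1$) and graded, and since its coproduct lands inside $H_\mathcal{E}\otimes H_\mathcal{E}$ by the previous paragraph, the same recursion computed in $H_\mathcal{E}$ produces an antipode for $H_\mathcal{E}$; by uniqueness, this must agree with the restriction of the antipode $S$ of $H_\mathcal{C}$. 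Therefore $S(H_\mathcal{E})\subseteq H_\mathcal{E}$, and $H_\mathcal{E}$ is a Hopf subalgebra of $H_\mathcal{C}$. The only step requiring any actual content is the one invoking closure under restriction; everything else is formal.
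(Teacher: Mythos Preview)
Your proof is correct and complete. The paper itself does not provide an explicit proof of this lemma, stating only that it is ``obvious from the above definitions of the Hopf algebra of multi-complexes''; your argument supplies exactly the details one would expect---closure of $\Delta$ via closure under restriction, and closure under the antipode via the standard recursion for connected graded bialgebras---and so is in the same spirit as what the paper leaves implicit.
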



\begin{myeg}
(1)	Let $\mathcal{C}_\mathcal{G}$ be the set of multi-complexes of the form $C_G$ for some finite graph $G$. It is obvious clear that restricting such a multi-complexes produces another multi-complex of the same form. Then, $H_{\mathcal{C}_{\mathcal G}}$ is a Hopf subalgebra of $H_{\mathcal{C}}$. This Hopf algebra is known as the Hopf algebra of graphs.  \\ 
(2) Similarly, if $\mathcal{C}_\mathcal{MG}$ and $\mathcal{C}_\mathcal{HG}$ denote the set of multi-complexes associated to multigraphs and hypergraps, respectively, as in Examples \ref{example: multi-graph complex} and \ref{example: hyper-graph complex}. Again, it is easy to see that these families of multi-graphs are closed under disjoint union and restriction, and thus the Hopf algebras $H_{\mathcal{C}_\mathcal{MG}}$ and $H_{\mathcal{C}_\mathcal{MG}}$ are Hopf subalgebra of $H_{\mathcal{C}}$. \\
(3) The similar statements work for simplicial complexes, delta complexes and colored simplicial complexes (see Examples 
\ref{example: simplicial complex} and \ref{example: colored simplicial complexes}); each of these classes of multi-complexes generate a Hopf subalgebra of $H_{C}$. To see this, for example, for colored simplicial complexes, by the above Lemma one only needs to note that the operation of taking the ``induced" complex agree: that is, if $(S,f)$ is a colored simplicial complex as in \ref{example: colored simplicial complexes}, and $X$ is  a subset of the vertices of $S$, if we let $S'$ take the colored simplicial complex obtained by retaining all the faces of $S$ supported on vertices in $X$, together with their respective colorings, and then  associate the multi-complex $C(S',f_{\vert S'})$, we obtain precisely the restriction of the multi-complex $C=C(S,f)$ to the set $X\subseteq n_C$.   
\end{myeg}
	
\section{Primitive elements of the Hopf algebra of multi-complexes} \label{Section: primitive for multi-complexes}

In this section, we obtain a base of the vector space of primitive elements of $H_\mathcal{C}$ and investigate its properties. 


\begin{notation}
	Let $C$ be a multi-complex based on $n_C$. By slight abuse of notation, we write $D \preceq C$ if $D$ is a sub-multi-complex of $C$ such that $n_D=n_C$.
\end{notation}

\begin{lem}\label{lemma: trivial lemma}
	Let $C_1,C_2$ be multi-complexes based on $X_1$ and $X_2$ respectively, and $C=C_1 \sqcup C_2$. Then we have the following:
	\begin{enumerate}
		\item 
		If $D_1 \preceq C_1$ and $D_2 \preceq C_2$, then $(D_1\sqcup D_2) \preceq C$. 
		\item 
		If $D \preceq C$, then $D=D_1\sqcup D_2$ where  $D_i=D|_{X_i}=D\cap C_i$ for $i=1,2$. Furthermore, $D_i$ consists of all $A$ in $D$ for which $\emph{supp}(A)\subseteq X_i$.
		
	\end{enumerate}	
\end{lem}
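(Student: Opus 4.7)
The plan is to derive both statements from a single structural observation about the disjoint union: when $C = C_1 \sqcup C_2$ is formed, the multisets of $C$ come in two disjoint families (the $A_i$'s from $C_1$, which have support contained in $X_1$, and those from $C_2$, with support in $X_2$), and by the definition of the disjoint union, no comparability $\preceq$ in $C$ crosses between the two families. So every multiset $A \in C$ satisfies exactly one of $\textrm{supp}(A) \subseteq X_1$ or $\textrm{supp}(A) \subseteq X_2$, and if $A \preceq B$ in $C$ then $A,B$ belong to the same $C_i$.

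For part (1), I would first note that $D_1 \sqcup D_2$ is based on $X_1 \sqcup X_2 = n_C$, and that its family of multisets is a sub-multiset of the family of $C$. The condition of being a sub-multi-complex then reduces to closure under $\preceq$: given $B \in D_1 \sqcup D_2$ and $A \preceq B$ in $C$, the structural observation above forces $A$ and $B$ to lie in the same side, say $C_1$, hence $B \in D_1$ and then $A \in D_1 \subseteq D_1 \sqcup D_2$ by closure of $D_1$ inside $C_1$. Thus $D_1 \sqcup D_2$ is closed under downward comparison in $C$, and is therefore a sub-multi-complex of $C$ with the correct base set, i.e.\ $(D_1 \sqcup D_2) \preceq C$.

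For part (2), the structural observation again does the work: every $A \in D \subseteq C$ has $\textrm{supp}(A)$ contained in exactly one $X_i$, so partitioning $D$ by this criterion gives a disjoint splitting of its family of multisets into pieces $D_1$ and $D_2$ with $D_i = \{A \in D : \textrm{supp}(A)\subseteq X_i\}$. This matches both the definition of $D|_{X_i}$ (restriction picks out multisets supported in $X_i$) and the definition of $D \cap C_i$ (Definition \ref{definition: intersection}, since the $C_i$'s consist exactly of the $A$'s with $\textrm{supp}(A) \subseteq X_i$), proving the equality $D_i = D|_{X_i} = D \cap C_i$. To verify $D = D_1 \sqcup D_2$, I would check that each $D_i$ is a multi-complex based on $X_i$ (singletons $\{k\}$ for $k \in X_i$ lie in $D$ because $n_D = n_C$, and they land in $D_i$; the required closure of $D_i$ under $\preceq$ inside $C_i$ is inherited from the closure of $D$ under $\preceq$ inside $C$), and then observe that the partial order on $D$ is exactly the union of the partial orders on $D_1$ and on $D_2$, again because no relation in $C$ crosses between $C_1$ and $C_2$.

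The only subtlety, and the place I would be most careful, is the bookkeeping distinction between sub-multi-complexes with a possibly smaller base set and those satisfying the more restrictive condition $n_D = n_C$ from the notation convention preceding the lemma; once it is recorded that $n_D = n_C = X_1 \sqcup X_2$ forces all singletons of $n_C$ to lie in $D$, the induced base sets of $D_1$ and $D_2$ come out to $X_1$ and $X_2$ respectively, and the rest of the argument is a straightforward verification from the definitions of $\sqcup$, restriction, and intersection.
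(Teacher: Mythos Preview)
Your proposal is correct and takes essentially the same approach as the paper: both arguments rest on the observation that every multiset in $C=C_1\sqcup C_2$ lies in exactly one of the $C_i$, with no $\preceq$-relations crossing between them. The paper's own proof is extremely terse (it declares part (1) ``trivial'' and for part (2) simply notes that each $A_i\in D$ lies in either $C_1$ or $C_2$, hence in $D_1$ or $D_2$), whereas you have spelled out the verification of closure under $\preceq$, the base-set bookkeeping, and the identification $D_i=D|_{X_i}=D\cap C_i$ in more detail than the paper does.
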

\begin{proof}
The first assertion is trivial. For the second assertion, suppose that $A_i \in D$. By the definition, either $A_i \in C_1$ (in this case $A_i \in D_1$) or $A_i \in C_2$ (in this case $A_i \in D_2$), showing that $D \subseteq D_1 \sqcup D_2$. Conversely, one can easily see from the definition that $D_1\sqcup D_2 \subseteq D$. 
\end{proof}

	
	
	


For a multi-complex $C=\{A_i\}$, let $X_C$ be the set of sub-multi-complexes $D$ of $C$ such that $D \preceq C$, considered as a poset with the partial order given by the same $\preceq$; that is, for $D,E$ such that $D\preceq C$ and $E\preceq C$, we have $D \preceq E$ if and only if $D$ is a sub-multi-complex of $E$. Let $\mu_P$ be the M\"obius function on $X_C$. For each multi-complex $C$, we define the following element in $H_\mathcal{C}$:
\begin{equation}\label{eq: P_C}
P_C:=\sum_{D \preceq C}\mu_P(D,C)D,
\end{equation} 
where the sum runs over all sub-multi-complexes $D$ of $C$ such that $n_D=n_C$. Obviously, this is well defined as an element of $H_\mathcal{C}$ and depends only of the isomorphism class of $C$.

\begin{lem}\label{lemma: mobius1}
	Let $C_1$ and $C_2$ be multi-complexes and $C=C_1 \sqcup C_2$. For any sub-multi-complex $D$ of $C$, we have that
	\begin{equation}\label{eq: mobius1}
	\mu_P(D,C) = \mu_P(D_1,C_1)\cdot \mu_P(D_2,C_2),
	\end{equation}
	where $D=D_1\sqcup D_2$ as in Lemma \ref{lemma: trivial lemma}. 
\end{lem}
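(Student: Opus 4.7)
The plan is to reduce the statement to the classical fact that the Möbius function of a product of finite posets is the product of the Möbius functions of the factors. To invoke this, I would first establish a poset isomorphism
\[
\varphi \colon X_C \longrightarrow X_{C_1} \times X_{C_2}, \qquad D \longmapsto (D_1, D_2),
\]
where $D_i := D \cap C_i$ as in Lemma \ref{lemma: trivial lemma}(2). The product on the right is ordered componentwise.

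First I would check that $\varphi$ is well-defined, i.e.\ that $D_i \preceq C_i$ for $i=1,2$. Since $n_D = n_C = X_1 \sqcup X_2$, every singleton $\{k\}$ with $k \in X_i$ lies in $D$ and has support in $X_i$, so by Lemma \ref{lemma: trivial lemma}(2) it lies in $D_i$; hence $n_{D_i} = X_i = n_{C_i}$. Next, I would exhibit the inverse $\psi(E_1, E_2) := E_1 \sqcup E_2$, which lands in $X_C$ by Lemma \ref{lemma: trivial lemma}(1); the equalities $\psi \circ \varphi = \id$ and $\varphi \circ \psi = \id$ are immediate from Lemma \ref{lemma: trivial lemma}(2) together with the fact that the elements of $E_1 \sqcup E_2$ whose support lies in $X_i$ are exactly those of $E_i$ (since $X_1$ and $X_2$ are disjoint).

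Then I would verify that $\varphi$ preserves and reflects the order: $D \preceq E$ in $X_C$ if and only if $D_1 \preceq E_1$ in $X_{C_1}$ and $D_2 \preceq E_2$ in $X_{C_2}$. The ``only if'' direction is immediate, since any $A \in D_i$ lies in $D \subseteq E$, and $\textrm{supp}(A) \subseteq X_i$ forces $A \in E_i$. The ``if'' direction is equally short: every $A \in D$ has support in either $X_1$ or $X_2$, hence lies in $D_1$ or $D_2$, and is therefore in $E_1 \cup E_2 \subseteq E$.

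Once $X_C \cong X_{C_1} \times X_{C_2}$ as posets, the identity
\[
\mu_P(D,C) = \mu_P(D_1,C_1)\,\mu_P(D_2,C_2)
\]
follows from the standard product formula for Möbius functions (a routine induction on the rank, or equivalently the fact that the zeta function of a product poset is the tensor product of the zeta functions, which inverts componentwise). I do not expect any real obstacle here: the only point that needs care is checking that restriction and disjoint union are mutually inverse on the level of sub-multi-complexes containing all singletons of $n_C$, and this is precisely the content of Lemma \ref{lemma: trivial lemma}.
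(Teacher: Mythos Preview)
Your argument is correct. It is, in fact, exactly the alternative proof the paper sketches in the paragraph immediately following its own proof of Lemma~\ref{lemma: mobius1}: identify $X_C \cong X_{C_1}\times X_{C_2}$, note that $\zeta_P$ corresponds to $\zeta_{P_1}\otimes\zeta_{P_2}$ under the resulting isomorphism of incidence algebras, and invert.

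The paper's \emph{primary} proof takes a different route: it invokes Philip Hall's formula $\mu(x,y)=\sum_T(-1)^{\ell(T)}$ (sum over chains from $x$ to $y$) and attempts to set up a bijection between chains from $D$ to $C$ and pairs of chains from $D_i$ to $C_i$. Your approach is cleaner for two reasons. First, the poset-product formula for $\mu$ is a standard off-the-shelf fact, so once the isomorphism $X_C\cong X_{C_1}\times X_{C_2}$ is in hand there is nothing left to do. Second, the chain bijection in the paper's argument is delicate: the ``staircase'' map $(T_1,T_2)\mapsto T_1\sqcup T_2$ as written does not actually hit every chain from $D$ to $C$ (a general chain can interleave steps in the two factors arbitrarily, not just in staircase order), so making that argument watertight requires more bookkeeping than your route. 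The trade-off is only that your argument relies on a cited fact about product posets, whereas the chain argument is in principle self-contained.
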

\begin{proof}
	Recall that for a poset $P$ and $x \preceq y \in P$, a chain from $x$ to $y$ is a sequence $T=(x_0,x_1,\dots,x_k)$ in $P$ such that
	\[
	x=x_0 \prec x_1 \prec \cdots \prec x_k=y
	\]
	The length $\ell(T)$ of a chain $T$ is then defined as $\ell(T)=k$.  The M\"{o}bius function $\mu$ for $P$ is given as follows: (see, \cite[Theorem 1.2.]{crapo2008primitive})
	\[
	\mu(x,y)=\sum (-1)^{\ell(T)}, 
	\]
	where the sum runs over all chains $T$ from $x$ to $y$ in $P$. 
	
	Now, let $T_1=(D_1,a_1,\dots,a_n,C_1)$ and $T_2=(D_2,b_1,\dots,b_m,C_2)$ be chains. Then we define the following chain $T_1 \sqcup T_2$:
	\[
	D_1\sqcup D_2 \prec a_1 \sqcup b_1 \prec a_2 \sqcup b_1 \prec \dots \prec a_n \sqcup b_1  \prec a_n \sqcup b_2  \prec a_n \sqcup b_3  \prec  \cdots \prec a_n \sqcup b_m \prec C_1 \sqcup C_2.
	\]
	Then, we have $\ell(T_1 \sqcup T_2) = \ell(T_1)+ \ell(T_2) -2$. Also, one can easily observe that $T_1'\sqcup T_2' = T_1 \sqcup T_2$ if and only if $(T_1',T_2')=(T_1,T_2)$. Furthermore, any chain $T$ from $D$ to $C$ arises in this way. In particular, we have that
	\[
	\mu_P(D,C) = \sum (-1)^{\ell(T)} = \sum (-1)^{\ell(T_1 \sqcup T_2)} = \sum (-1)^{\ell(T_1)+ \ell(T_2)-2} =\sum (-1)^{\ell(T_1)+ \ell(T_2)}
	\]
	\[
	=\left(\sum (-1)^{\ell(T_1)}\right)\left(\sum (-1)^{\ell(T_2)}\right) = \mu_P(D_1,C_1)\mu_P(D_2,C_2).
	\]
\end{proof}

We note that the above result can also be obtained by observing that the poset $P$ of sub-multi-complexes of $C$ is isomorphic to $P_{1}\times P_{2}$, where $P_{i}$ is the poset of sub-multi-complexes of $D_i$. Furthermore, this implies that there is an isomorphism of incidence algebras of posets $I(P)\cong I(P_1)\otimes I(P_2)$. Via this isomorphism, the zeta function $\zeta_P$ corresponds to the tensor $\zeta_{P_1}\otimes \zeta_{P_2}$, and this implies the desired result (since the M\"{o}bius function is the inverse element of the zeta function of a poset).

\begin{pro}\label{proposition: P_G in gen}
	Let $\mathcal{C}$ be the set of isomorphism classes of multi-complexes, considered as a monoid (with the product given by the disjoint union). Then the function
	\[
	P:\mathcal{C} \longrightarrow H_\mathcal{C}, \quad C \mapsto P_C
	\]
	is multiplicative. That is, $P(C_1\cdot C_2)=P(C_1) \cdot P(C_2)$. \footnote{Strictly speaking, here $C_1$ and $C_2$ are isomorphism classes of multi-complexes in $\mathcal{C}$.}
\end{pro}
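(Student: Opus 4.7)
The plan is to verify the identity directly, by expanding the definition of $P_C$ for $C = C_1 \sqcup C_2$ and reorganizing the sum using the two preceding lemmas. The argument should be essentially a bookkeeping computation, with no real obstacle, since Lemmas \ref{lemma: trivial lemma} and \ref{lemma: mobius1} already do the work of decomposing sub-multi-complexes of a disjoint union and the M\"obius function on them.

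First I would write out $P_C = \sum_{D \preceq C} \mu_P(D,C)\, D$, where the sum ranges over sub-multi-complexes $D$ of $C = C_1 \sqcup C_2$ with $n_D = n_C = n_{C_1} \sqcup n_{C_2}$. By Lemma \ref{lemma: trivial lemma}, every such $D$ can be uniquely written as $D = D_1 \sqcup D_2$ with $D_i = D|_{n_{C_i}} \preceq C_i$; conversely, every such pair $(D_1,D_2)$ gives rise to a sub-multi-complex $D_1 \sqcup D_2 \preceq C$. Hence the indexing set of the sum defining $P_C$ is in bijection with the product of the indexing sets for $P_{C_1}$ and $P_{C_2}$.

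Next I would apply Lemma \ref{lemma: mobius1} to substitute $\mu_P(D_1 \sqcup D_2, C_1 \sqcup C_2) = \mu_P(D_1, C_1)\, \mu_P(D_2, C_2)$. Since the product in $H_\mathcal{C}$ is defined so that $D_1 \sqcup D_2 = D_1 \cdot D_2$, the sum then factors as
\[
P_C \;=\; \sum_{D_1 \preceq C_1}\sum_{D_2 \preceq C_2} \mu_P(D_1, C_1)\, \mu_P(D_2, C_2)\, (D_1 \cdot D_2) \;=\; \Bigl(\sum_{D_1 \preceq C_1} \mu_P(D_1, C_1)\, D_1\Bigr) \cdot \Bigl(\sum_{D_2 \preceq C_2} \mu_P(D_2, C_2)\, D_2\Bigr),
\]
which by definition equals $P_{C_1} \cdot P_{C_2}$. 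This proves the claim.

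The only potentially subtle point is checking that the bijection between $\{D \preceq C\}$ and $\{D_1 \preceq C_1\} \times \{D_2 \preceq C_2\}$ truly sends the order-ideal structure of one poset to the product of the other two, so that Lemma \ref{lemma: mobius1} is applicable in the form stated; but this is exactly what Lemma \ref{lemma: trivial lemma} guarantees, together with the observation that comparability $D \preceq E$ in the disjoint union splits componentwise. So I would simply cite these two lemmas and present the displayed computation as the body of the proof.
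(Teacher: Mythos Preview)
Your proposal is correct and follows essentially the same approach as the paper's proof: both expand $P_{C_1\sqcup C_2}$, invoke Lemma~\ref{lemma: trivial lemma} to reindex the sum over pairs $(D_1,D_2)$ with $D_i\preceq C_i$, apply Lemma~\ref{lemma: mobius1} to factor the M\"obius function, and then factor the resulting double sum into $P_{C_1}\cdot P_{C_2}$. The paper's presentation is slightly more compressed, but the logical content is identical.
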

\begin{proof}
	Let $C=C_1 \sqcup C_2$ (the disjoint union of $C_1$ and $C_2$). For each sub-multi-complex $D$ of $C$, we let $D_i=D \cap C_i$ for $i=1,2$. From Lemmas \ref{lemma: trivial lemma} and \ref{lemma: mobius1}, we have
	\[
	P(C)=\sum_{D \preceq C}\mu_P(D,C) D = \sum_{\substack{D_1\preceq C_1\\ D_2\preceq C_2}} \mu_P(D_1,C_1)\mu_P(D_2,C_2) D_1D_2
	\]
	\[
	=\sum_{D_2 \preceq C_2} \left( \sum_{D_1 \preceq C_1} \mu_P(D_1,C_1) D_1\right) \mu_P(D_2,C_2)D_2 = P_{C_1}P_{C_2}=P(C_1)P(C_2). 
	\]
\end{proof}

From Proposition \ref{proposition: P_G in gen}, if $C=C_1\cdot C_2 \cdots C_k$ (as an element of $H_\mathcal{C}$) for some connected multi-complexes $C_i$, then 
\[
P_C=\prod_{i=1}^k P_{C_i}.
\]

\begin{pro}\label{proposition: P_C primitive}
	Let $C$ be a connected multi-complex. Then $P_C$ is a primitive element in $H_\mathcal{C}$. 	
\end{pro}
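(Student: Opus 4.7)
The plan is to compute $\Delta(P_C)$ explicitly and read off primitivity, using two applications of Möbius inversion. The first, purely formal, step is to note that the defining formula $P_A = \sum_{B \preceq A}\mu_P(B,A)B$ is equivalent, by Möbius inversion in the poset $X_A$, to the companion identity
\[
A = \sum_{B \preceq A} P_B
\]
valid for every multi-complex $A$. This expression will let me expand each restriction $D|_X$ appearing in the coproduct in terms of the $P_E$'s.

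The heart of the argument is to establish, for every multi-complex $D$, the expansion
\[
\Delta(D) = \sum_{E \preceq D} Q_E, \qquad Q_E := \sum_{I \subseteq [m]} \Big(\prod_{i \in I} P_{E_i}\Big) \otimes \Big(\prod_{i \notin I} P_{E_i}\Big),
\]
where $E_1,\dots,E_m$ denote the connected components of $E$. To derive it, I would substitute the companion identity for both $D|_X$ and $D|_Y$ in $\Delta(D) = \sum_{X \sqcup Y = n_D} D|_X \otimes D|_Y$. Pairs $(E_X, E_Y)$ with $E_X \preceq D|_X$ and $E_Y \preceq D|_Y$ biject with sub-multi-complexes $E = E_X \sqcup E_Y \preceq D$ whose faces do not cross the partition $X \sqcup Y$; equivalently, each connected component of $E$ must lie entirely in $X$ or entirely in $Y$. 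Regrouping by $E$, the compatible partitions $(X,Y)$ correspond bijectively to subsets $I \subseteq [m]$ of its set of components, and the multiplicativity of $P$ (Proposition \ref{proposition: P_G in gen}) then converts $P_{E|_X}$ into $\prod_{i \in I} P_{E_i}$, producing the formula for $Q_E$.

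Once this expansion is in hand, the conclusion is immediate. Substituting into $\Delta(P_C) = \sum_{D \preceq C}\mu_P(D,C)\Delta(D)$ and swapping the order of summation gives
\[
\Delta(P_C) = \sum_{E \preceq C} Q_E \Big(\sum_{E \preceq D \preceq C} \mu_P(D,C)\Big) = Q_C,
\]
where the inner sum collapses to $\delta_{E,C}$ by the standard Möbius identity. When $C$ is connected we have $m = 1$ and $E_1 = C$, so $Q_C = P_C \otimes 1 + 1 \otimes P_C$, which proves primitivity.

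The main obstacle is the bookkeeping in the middle step: one must verify the bijection between pairs $(E_X, E_Y)$ and sub-multi-complexes $E \preceq D$ with no crossing face, and recognize the partitions $(X, Y)$ compatible with a fixed $E$ as exactly the subsets of its set of connected components. Neither observation is deep, but both are essential. Once they are set up carefully, the two applications of Möbius inversion and the multiplicativity of $P$ fit together cleanly, and one obtains not only primitivity of $P_C$ for connected $C$, but an explicit formula for $\Delta(P_D)$ for arbitrary $D$ consistent with treating each $P_{E_i}$ attached to a connected component as a primitive element.
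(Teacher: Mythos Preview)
Your proof is correct, and the route differs meaningfully from the paper's. The paper stays in the $C$-basis throughout: it expands $\Delta(P_C)$ as a double sum over partitions $X\sqcup Y=n_C$ and over pairs $(E,H)$ with $n_E=X$, $n_H=Y$, and then shows by an induction on the length of the interval $[E\sqcup H,C]$ that the coefficient $\alpha(E,H)=\sum_{F:\,F|_X=E,\,F|_Y=H}\mu_P(F,C)$ vanishes whenever both $X$ and $Y$ are nonempty (connectivity of $C$ enters only to guarantee $E\sqcup H\neq C$). You instead pass to the $P$-basis immediately via the companion identity $A=\sum_{B\preceq A}P_B$, establish the structural formula $\Delta(D)=\sum_{E\preceq D}Q_E$, and then Möbius-invert once more to collapse $\Delta(P_C)$ to $Q_C$. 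Your approach replaces the paper's somewhat delicate interval-length induction with two clean applications of Möbius inversion plus a combinatorial bijection, and it yields the bonus of an explicit comultiplication formula $\Delta(P_D)=Q_D$ for \emph{every} $D$, connected or not, exhibiting directly that each $P_{E_i}$ behaves as a primitive generator. One point of ordering to flag: the companion identity you invoke is the content of Lemma~\ref{lemma: connected inversion1} and Proposition~\ref{proposition: inversion P}, which in the paper are placed \emph{after} the present proposition; their proofs, however, use only Möbius inversion and the multiplicativity of $P$ (Proposition~\ref{proposition: P_G in gen}), not primitivity, so there is no circularity --- you would simply need to reorder or re-derive that identity up front.
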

\begin{proof}
	For a multi-complex $F$, we let $n_F$ be the set on which $F$ is based. Since the coproduct is a linear map, with the notation as in \eqref{eq: coproduct}, we have
	\begin{align}\label{eq: connected primitive1}
	\begin{aligned}
	\Delta(P_C)&=\sum_{F\preceq C}\mu_P(F,C) \sum_{X \sqcup Y=n_F} F|_X \otimes F|_Y =\sum_{X \sqcup Y=n_F}\sum_{F\preceq C}\mu_P(F,C)F|_X \otimes F|_Y 
	\\
	&=\sum_{X\sqcup Y=n_C}\sum_{\substack{E,H:\\ n_E=X,n_H=Y}}\left(\sum_{\substack{F:\\ F|_X=E,F|_Y=H}} \mu_P(F,C) E \otimes H \right)
	\end{aligned}
	\end{align}
	For $X \sqcup Y = n_C$, we let
	\[
	\alpha(E,H):=\sum_{\substack{F:\\ F|_X=E,F|_Y=H}} \mu_P(F,C).
	\]
	We claim that if $X \neq \emptyset$ and $Y \neq \emptyset$, then $\alpha(E,H)=0$. Indeed, we fix $X \sqcup Y =n_C$ with $X \neq \emptyset$ and $Y \neq \emptyset$. For any pair $E,H$ with $n_E=X$, $n_H=Y$ and $E \sqcup H \preceq C$, we have
	\begin{equation}
	\alpha(E,H)=\sum_{\substack{F:\\ F|_X=E,F|_Y=H}} \mu_P(F,C) =\sum_{\substack{E\sqcup H \preceq F \preceq C:\\ F|_X=E,F|_Y=H}} \mu_P(F,C).
	\end{equation}
	Note that if $E=C|_X$ and $H=C|_Y$, then for any $F$ such that $E\sqcup H \preceq F \preceq C$ we have $F|_X=E$ and $F|_Y=H$. Thus, we have
	\[
	\alpha(E,H)=\sum_{E\sqcup H\preceq F \preceq C}\mu_P(F,C)=0
	\]
	since the interval $[E\sqcup H,C]$ is non-trivial by the connectivity of $C$ (i.e. $C\neq E\sqcup H$). Now, let $E\sqcup H\preceq C$ be arbitrary with $n_E=X$ and $n_H=Y$. We have again by the definition of $\mu$ and the connectivity of $C$, 
	\[
	\sum_{E\sqcup H \preceq F \preceq C} \mu_P(F,C)=0.
	\]
	But, now we have
	\begin{align}\label{eq: connected primitive2}
	\begin{aligned}
	0&=\sum_{E\sqcup H \preceq F\preceq C}\mu_P(F,C)  =\sum_{\substack{E',H':\\n_{E'}=X,n_{H'}=Y\\ E\preceq E',H \preceq H',E'\sqcup H' \preceq C}}\left(\sum_{\substack{F:\\ F|_X=E',F|_Y=H'}} \mu_P(F,C)  \right)
	\\
	&=\left(\sum_{\substack{E',H':\\ E\preceq E', H\preceq H'\\n_{E'}=X,n_{H'}=Y\\ E\sqcup H \prec E'\sqcup H' \preceq C}} \alpha(E',H') \right) + \alpha(E,H)
	\end{aligned}
	\end{align}
	Using the above formula \eqref{eq: connected primitive2}, an easy induction on the length of the interval $[E\sqcup H, C]$ shows  that $\alpha(E,H)=0$ (since  $\ell([E'\sqcup H',C])<\ell([E\sqcup H,C])$ for $E',H'$ as above). 
	Now, we have from \eqref{eq: connected primitive1}, 
	\begin{equation}\label{eq: connected primitive3}
	\Delta(P_C)=\sum_{X\sqcup Y=n_C}\sum_{\substack{E,H:\\ n_E=X,n_H=Y}}\alpha(E,H) E \otimes H
	\end{equation}
	But, we have shown that $\alpha(E,H)=0$ except when $X=n_E=n_C$ or $Y=n_H=n_C$, when obviously $\alpha(C,\emptyset)=\alpha(\emptyset,C)=1$. In particular, we have
	\[
	\Delta(P_C)=\sum_{F \preceq C}\mu_P(F,C)F \otimes \emptyset + \sum_{F \preceq C} \mu_P(F,C) \emptyset \otimes F = P_C \otimes \emptyset + \emptyset \otimes P_C, 
	\]
	showing that $P_C$ is primitive. 
\end{proof}

\begin{lem}\label{lemma: connected inversion1}
	Let $C$ be a connected multi-complex. Then we have the following:
	\[
	C= \sum_{D\preceq C} P_D, 
	\]
	where the sum runs over all sub-multi-complexes $D$ of $C$ such that $n_D=n_C$.
\end{lem}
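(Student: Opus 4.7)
The plan is to prove the identity by straightforward Möbius inversion on the poset $X_C$. First I would unfold the definitions:
\[
\sum_{D \preceq C} P_D \;=\; \sum_{D \preceq C} \sum_{E \preceq D} \mu_P(E,D)\, E.
\]
Note that $D \preceq C$ forces $n_D = n_C$, and $E \preceq D$ forces $n_E = n_D = n_C$, so every sub-multi-complex $E$ appearing on the right lies in $X_C$. Before swapping the order of summation, I would observe that the value $\mu_P(E,D)$ is an invariant of the interval $[E,D]$, so the Möbius function originally defined on $X_D$ for the inner sum agrees with the Möbius function on $X_C$ restricted to the same interval. This justifies treating $\mu_P$ uniformly inside $X_C$.

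Next I would exchange the order of summation:
\[
\sum_{D \preceq C} P_D \;=\; \sum_{E \preceq C} \Bigl( \sum_{E \preceq D \preceq C} \mu_P(E,D) \Bigr) E.
\]
By the defining relation of the Möbius function in the incidence algebra of $X_C$ (i.e., $\mu_P \ast \zeta = \delta$), the inner sum equals $1$ if $E = C$ and $0$ otherwise. The only surviving term on the right is therefore $C$, yielding the claimed identity. I expect no substantial obstacle: the one point worth stating carefully is the compatibility of $\mu_P$ on $X_D$ with $\mu_P$ on $X_C$, and the legitimacy of interchanging the two finite sums.

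One remark worth including: the argument above never invokes connectedness of $C$, so the same proof establishes the identity for an arbitrary multi-complex (which is Proposition \ref{proposition: inversion P}). The connectedness hypothesis in this lemma is harmless and only reflects the logical order in which the results will be used together with Proposition \ref{proposition: P_C primitive} to identify $\{P_C\}_{C \in \mathcal{T}}$ as a basis for the primitives.
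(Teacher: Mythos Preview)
Your proof is correct and is essentially the paper's own argument: both are M\"obius inversion on the poset $X_C$, the paper quoting the inversion formula for the incidence algebra directly while you unpack the underlying identity $\mu_P * \zeta = \delta$ by hand after exchanging the two finite sums. Your observation that connectedness plays no role is also correct; the paper instead derives the general case separately in Proposition~\ref{proposition: inversion P} from this lemma via the multiplicativity $P_{C_1 \sqcup C_2} = P_{C_1} P_{C_2}$, so your single argument is marginally more economical.
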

\begin{proof}
	Let $X_C$ be the set of sub-multi-complexes $D$ of $C$ such that $n_D=n_C$ (i.e. $D \preceq C$) and $H_\mathcal{C}$ be the Hopf algebra of multi-complexes, as before. Consider the following functions:
	\[
	g: X_C \longrightarrow H_\mathcal{C}, \quad D \mapsto D, 
	\]
	\[
	f: X_C \longrightarrow H_\mathcal{C}, \quad D \mapsto P_D.
	\]
	By the definition of $P$, for $D \in X_C$, we clearly have
	\[
	f(D)=P_D=\sum_{E \preceq D} \mu_P(E,D)D=\sum_{E \preceq D} \mu_P(E,D)g(D).
	\]
	It now follows from the M\"obius inversion formula (for the incidence algebra of the poset $(P,\preceq)$ of sub-multi-complexes $D$ of $C$ with $n_D=n_C$) that
	\[
	g(D)=\sum_{E \preceq D} \zeta(E,D)f(E)=\sum_{E \preceq D} P_{E}, 
	\]
	where $\zeta$ is the zeta function of the incidence algebra of $P$ $(\zeta(X,Y)=1$ for all $X \preceq Y$). In particular, when $D=C$, we have that
	\[
	C=\sum_{E \preceq C}P_E. 
	\]
\end{proof}

\begin{pro}\label{proposition: inversion P}
	Let $C$ be a multi-complex. Then we have the following:
	\[
	C= \sum_{D \preceq C} P_D. 
	\]
\end{pro}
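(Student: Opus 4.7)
The plan is to reduce the statement to the connected case handled in Lemma \ref{lemma: connected inversion1}, using the multiplicativity of $P$ established in Proposition \ref{proposition: P_G in gen} together with the bijection between sub-multi-complexes of a disjoint union and pairs of sub-multi-complexes supplied by Lemma \ref{lemma: trivial lemma}.

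First I would decompose $C$ into connected components, writing $C = C_1 \sqcup C_2 \sqcup \cdots \sqcup C_k$ where each $C_i$ is a connected multi-complex. By Lemma \ref{lemma: connected inversion1}, for each $i$ we have the identity
\[
C_i = \sum_{D_i \preceq C_i} P_{D_i}
\]
in $H_{\mathcal{C}}$, where the sum ranges over sub-multi-complexes $D_i$ of $C_i$ based on $n_{C_i}$. Multiplying these identities together in $H_{\mathcal{C}}$ and expanding the product gives
\[
C = C_1 \cdot C_2 \cdots C_k = \sum_{(D_1, \ldots, D_k)} P_{D_1} \cdot P_{D_2} \cdots P_{D_k},
\]
where the sum ranges over tuples with $D_i \preceq C_i$ for each $i$.

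Next I would apply Proposition \ref{proposition: P_G in gen}, which is multiplicative, to rewrite each summand $P_{D_1} \cdots P_{D_k}$ as $P_{D_1 \sqcup \cdots \sqcup D_k}$. Then Lemma \ref{lemma: trivial lemma} (applied iteratively) provides a bijection between tuples $(D_1, \ldots, D_k)$ with $D_i \preceq C_i$ and sub-multi-complexes $D \preceq C$: every such $D$ decomposes uniquely as $D = D_1 \sqcup \cdots \sqcup D_k$ with $D_i = D \cap C_i$, and conversely any such disjoint union is a sub-multi-complex of $C$ based on $n_C$. Combining these two observations yields
\[
C = \sum_{(D_1, \ldots, D_k)} P_{D_1 \sqcup \cdots \sqcup D_k} = \sum_{D \preceq C} P_D,
\]
as desired.

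There is no real obstacle here: the connected case is the substantive content (done in Lemma \ref{lemma: connected inversion1}), and the present statement is a clean bookkeeping consequence of multiplicativity of $P$ and the product decomposition of sub-multi-complexes. The only point requiring mild care is verifying that the indexing bijection of Lemma \ref{lemma: trivial lemma} respects the condition $n_D = n_C$, which is immediate since $n_D = n_{D_1} \sqcup \cdots \sqcup n_{D_k} = n_{C_1} \sqcup \cdots \sqcup n_{C_k} = n_C$ precisely when each $D_i \preceq C_i$.
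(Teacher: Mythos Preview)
Your proposal is correct and follows essentially the same approach as the paper's own proof: decompose $C$ into connected components, apply Lemma~\ref{lemma: connected inversion1} to each, multiply, and then use the bijection from Lemma~\ref{lemma: trivial lemma} together with the multiplicativity of $P$ (Proposition~\ref{proposition: P_G in gen}) to reassemble the sum over $D\preceq C$. The only cosmetic difference is that the paper first rewrites the expanded product as a sum over $D\preceq C$ of $\prod_i P_{D\cap C_i}$ and then applies multiplicativity, whereas you apply multiplicativity first and then invoke the bijection; the content is identical.
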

\begin{proof}
	We can uniquely write $C=C_1\cdots C_k$ for some  (necessarily connected) path-components $C_i$. It follows from Lemma \ref{lemma: connected inversion1} that for each $C_i$, we have 
	\[
	C_i=\sum_{D_i \preceq C_i} P_{D_i}. 
	\]
	In particular, we have that
	\[
	C=C_1 \cdots C_k =\prod_{i=1}^k(\sum_{D_i \preceq C_i}P_{D_i}) = \sum_{D \preceq C} (\prod_{i=1}^kP_{D\cap C_i}),
	\]
	since each $D\preceq C$ is uniuely written as $D=D_1\sqcup ... \sqcup D_k$ with $D_i=D\cap C_i=D|_{n_{C_i}}\preceq C_i$.  Furthermore, since in this case, $D=D_1\cdot D_2\cdots D_k$ in the monoid $\mathcal{C}$, it follows from Proposition \ref{proposition: P_G in gen} that 
	\[
	\sum_{D \preceq C} (\prod_{i=1}^kP_{D\cap C_i})=\sum_{D \preceq C} P_{D}.
	\]
\end{proof}

\begin{cor}\label{corollary: primitive polynomial multi-complex}
Let $C$ be a multi-complex and $\mathcal{T}$ be the set of isomorphism classes of connected multi-complexes. Then $C$ can be written (as an element of $H_\mathcal{C}$) as a polynomial with non-negative integer coefficients in the elements $\{P_C\}$ for $ C\in \mathcal{T}$.  
\end{cor}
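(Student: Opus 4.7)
The plan is to combine Proposition \ref{proposition: inversion P} (expressing $C$ as a sum of $P_D$'s indexed by $D\preceq C$) with Proposition \ref{proposition: P_G in gen} (multiplicativity of $P$ with respect to disjoint union) to write $C$ as a polynomial in the $\{P_E\}_{E\in\mathcal T}$. First I would start from the identity
\[
C=\sum_{D\preceq C} P_D
\]
already established in Proposition \ref{proposition: inversion P}. For each such sub-multi-complex $D$ (satisfying $n_D=n_C$), I would decompose $D$ uniquely as a disjoint union of its connected components, $D=D_1\sqcup D_2\sqcup\cdots\sqcup D_{k_D}$, where each $D_i$ is connected and hence represents an element of $\mathcal T$.

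Next I would apply Proposition \ref{proposition: P_G in gen} iteratively (or the explicit consequence stated immediately after its proof) to obtain
\[
P_D=P_{D_1}\cdot P_{D_2}\cdots P_{D_{k_D}},
\]
which displays $P_D$ as a monomial in the $\{P_E\}_{E\in\mathcal T}$. Substituting back into the expansion of $C$ yields
\[
C=\sum_{D\preceq C}\, P_{D_1}\cdot P_{D_2}\cdots P_{D_{k_D}},
\]
so $C$ is visibly a sum of monomials in $\{P_E\}_{E\in\mathcal T}$, each occurring with coefficient $1$. Grouping equal monomials then produces a polynomial expression whose coefficients are positive integers (namely, the multiplicities with which a given multiset of connected sub-multi-complexes occurs among the decompositions of the sub-multi-complexes $D\preceq C$), and in particular non-negative integers.

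There is essentially no obstacle beyond assembling these prior results, since the only subtlety, that the decomposition $D=D_1\sqcup\cdots\sqcup D_{k_D}$ into connected components is well-defined and unique, was already established in the lemma preceding the construction of $H_{\mathcal C}$. The somewhat more refined statement that the coefficient of $P_{D_1}\cdots P_{D_t}$ in the resulting polynomial equals the multiplicity $[C:D_1\cdots D_t]$ from Theorem C will be established separately; here it suffices to observe that the coefficients are manifestly non-negative integers, as required.
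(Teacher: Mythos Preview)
Your proposal is correct and follows essentially the same route as the paper's proof: start from Proposition~\ref{proposition: inversion P}, decompose each $D\preceq C$ into connected components, invoke the multiplicativity of $P$ (Proposition~\ref{proposition: P_G in gen}) to write $P_D$ as a monomial in the $P_{D_i}$, and regroup. The paper additionally remarks that one could alternatively write $C=C_1\cdots C_k$ in connected components and apply Proposition~\ref{proposition: inversion P} to each factor, but your argument is the one actually carried out there.
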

\begin{proof}
	From Proposition \ref{proposition: inversion P}, we have
	\begin{equation}\label{eq: product11}
	C=\sum_{D \preceq C} P_{D}=\sum_{j=0}^r P_{D_j}=\sum_{j=0}^r \left(\prod_{t=0}^{w_j}  P_{D_{j_t}}\right),
	\end{equation}
where $D_{j_t}$ are connected components of $D_j$. Now, regrouping terms appropriately in  \eqref{eq: product11} gives the desired result (alternatively, we can write $C=C_1\cdot...\cdot C_k$ and apply Proposition \ref{proposition: inversion P} to each $C_i$). 
\end{proof}

The following theorem gives a basis of the vector space of primitive elements of $H_{\mathcal{C}}$.
We need the following observation first. For any multi-complex $C=\{A_i\}_{i=1,...,t}$, let the size of $C$ be the number $|C|=t$. By the definition of the disjoint union for multi-complexes, we obviously have
\[
|C\sqcup D|=|C|+|D|.
\]
Therefore we have a grading on $H_\mathcal{C}$ by size. Note that we also have that if $C\preceq D$ but $C\neq D$ then $|C|<|D|$; consequently, if $C\preceq D$ and $|C|=|D|$ then $C=D$. 

\begin{mythm}\label{thm: main theorem P}
	Let $\mathcal{T}$ be the set of isomorphism classes of connected multi-complexes in $\mathcal{C}$. The set $\{P_C\}_{C \in \mathcal T}$ forms a basis of the vector space of primitive elements of $H_{\mathcal{C}}$. 
\end{mythm}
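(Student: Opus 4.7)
The plan is to leverage the multiplicativity of $P$ (Proposition \ref{proposition: P_G in gen}) together with the inversion formula (Proposition \ref{proposition: inversion P}) to identify $H_\mathcal{C}$ with a polynomial Hopf algebra on the $P_C$, $C \in \mathcal{T}$, and then to directly compute the primitives of such a polynomial algebra.

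First, I would show that $\{P_C\}_{C \in \mathcal{C}}$, indexed by \emph{all} isomorphism classes of multi-complexes, is a $k$-linear basis of $H_\mathcal{C}$. Since $P_C = C + \sum_{D \prec C,\, n_D=n_C} \mu_P(D,C)\,D$ and every $D$ appearing in the tail satisfies $|D| < |C|$, ordering isomorphism classes first by $|n_C|$ and then by $|C|$ turns the transition matrix between $\{C\}_{C \in \mathcal{C}}$ and $\{P_C\}_{C \in \mathcal{C}}$ into an upper unitriangular matrix, hence invertible; this yields the claim.

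Next, by the multiplicativity from Proposition \ref{proposition: P_G in gen}, if $C = C_1 \sqcup \cdots \sqcup C_k$ is the decomposition of $C$ into its connected components $C_i \in \mathcal{T}$, then $P_C = P_{C_1}\cdots P_{C_k}$. Therefore the basis $\{P_C\}_{C \in \mathcal{C}}$ of $H_\mathcal{C}$ coincides with the family of all commutative monomials in $\{P_C\}_{C \in \mathcal{T}}$, which forces $\{P_C\}_{C \in \mathcal{T}}$ to be algebraically independent and to generate $H_\mathcal{C}$ as a $k$-algebra. Hence $H_\mathcal{C} \cong k[P_C : C \in \mathcal{T}]$, and by Proposition \ref{proposition: P_C primitive} this is an isomorphism of Hopf algebras in which every polynomial variable is primitive.

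Finally, I would compute the primitives of a polynomial Hopf algebra $S = k[x_i]_i$ in which each $x_i$ is primitive, to deduce $\mathrm{Prim}(S) = \mathrm{span}_k\{x_i\}_i$. For a primitive $p = \sum_\alpha c_\alpha x^\alpha$, expanding $\Delta(x^\alpha) = \sum_{\beta+\gamma=\alpha}\binom{\alpha}{\beta}x^\beta \otimes x^\gamma$ and enforcing $\Delta(p) = p\otimes 1 + 1 \otimes p$ gives $c_\alpha\binom{\alpha}{\beta} = 0$ for every decomposition with $\beta,\gamma$ both nonzero; in characteristic zero this forces $c_\alpha = 0$ for all $|\alpha| \geq 2$, while $c_0 = \varepsilon(p) = 0$ by the counit axiom, leaving $p \in \mathrm{span}_k\{x_i\}$. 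Applying this to our identification yields the theorem. The main obstacle is essentially bookkeeping---making the triangularity precise and carrying out the primitive computation---and as a conceptual alternative one may invoke the Cartier–Kostant–Milnor–Moore theorem, which gives $H_\mathcal{C} \cong S(\mathrm{Prim}(H_\mathcal{C}))$ as graded Hopf algebras and, combined with our explicit symmetric-algebra description $H_\mathcal{C} \cong S(\mathrm{span}_k\{P_C : C \in \mathcal{T}\})$, forces $\mathrm{Prim}(H_\mathcal{C}) = \mathrm{span}_k\{P_C : C \in \mathcal{T}\}$.
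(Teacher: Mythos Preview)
Your argument is correct and follows essentially the same route as the paper: both use a size-triangularity observation for independence (the paper phrases it as ``the highest-size term in $\sum a_iP_{C_i}$ is $\sum_{i\in F}a_iC_i$'') and the fact that a primitive written as a polynomial in primitives must be linear with zero constant term for spanning. The only organizational difference is that you first prove the stronger statement that $\{P_C\}_{C\in\mathcal{C}}$ is a linear basis of $H_\mathcal{C}$ and then deduce the polynomial-algebra structure via multiplicativity, whereas the paper argues linear independence of $\{P_C\}_{C\in\mathcal{T}}$ directly and invokes Proposition~\ref{proposition: inversion P} for generation; your packaging yields the explicit identification $H_\mathcal{C}\cong k[P_C:C\in\mathcal{T}]$ along the way, which is a mild bonus.
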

\begin{proof}
	Let $V$ be the vector space of primitive elements in $H_\mathcal{C}$. We first show that the set $\{P_C\}_{C \in \mathcal{T}}$ is linearly independent. Suppose that we have 
	\begin{equation}\label{eq: linearly independent1}
	a_1P_{C_1} + \cdots + a_nP_{C_n} = 0. 
	\end{equation}
	where no two multi-complexes $C_i$ and $C_j$ for $i\neq j$ are isomorphic. Suppose that $a_i \neq 0$ for some $a_i$. From the definition, we have that
	\[
	P_{C_i} = \sum_{D_{i_j} \preceq C_i} \mu_P(D_{i_j},C_i)D_{i_j}. 
	\]
	 Consider the grading of $H_{\mathcal{C}}$ by size as above. Substituting $P_{C_i}$ above into equation \eqref{eq: linearly independent1}, using also the previous remark, we see that the element of highest size of the left hand side of that equality is of the form $\sum\limits_{i\in F}a_i{C_i}$ for some non-empty subset $F$ of $\{1,\dots,n\}$. For that to equal zero, we must have two indices $i$ and $j$ such that $C_i=C_j$. But, by definition, this implies that $P_{C_i}=P_{C_j}$, giving a contradiction. Therefore $\{P_C\}_{C \in \mathcal{T}}$ is linearly independent. 
	
	Finally, we note that $\{P_C\}_{C \in \mathcal{T}}$ spans $V$. From Proposition \ref{proposition: inversion P}, we have that the elements $P_C$ generates $H_{\mathcal{C}}$; but a set of primitives which generate the whole Hopf algebra must span the space of primitives. 
	(if $p\in V$, then write $p=f((P_{C_i})_{i=1,...,t})$, a polynomial in the $P_{C}$; this is primitive only when it is a linear polynomial with zero constant term.) 
\end{proof}

\section{Multi-complexes of dimension 1}\label{section: multi-complexes of dimension at most $1$}

We introduce now a natural notion of dimension for multi-complexes and examine here the case of multi-complexes of dimension 1, where formulas become much more explicit. In particular, they will apply for simple graphs, multigraphs, and hypergraphs.

\begin{mydef}
Let $C=\{A_i\}$ be a multi-complex. We define the \emph{dimension} $\dim(C)$ of $C$ to be the maximum number $d$ such that there is a chain $A_{i_0}\prec A_{i_1}\prec \dots \prec A_{i_d}$ in $C$ of length $d$. 
\end{mydef}

\begin{mydef}
Let $C=\{A_i\}_{i=1,\dots,t}$ be a multi-complex.\\
(i) If $S=\{A_i\}_{i\in F}$ is a subfamily of $C$ with $F\subseteq \{1,\dots,t\}$. We define the sub-multi-complex $\langle S\rangle$ generated by $S$ to be the smallest sub-multi-complex of $C$ containing $S$. \\
(ii) For each face $A_i$ of $C$, we define the dimension of $A_i$ to be the number
\[
\dim(A_i)=\dim(\langle \{A_i\}\rangle),
\] 
that is, the dimension of the sub-multi-complex of $C$ which is generated by $A_i$.
\end{mydef}

We note that the dimension of a face $A_i$ is then the length of a maximal chain in $C$ which terminates at $A_i$. 
With these definitions, it is immediate to see that simple graphs, multigraphs and hypergraphs, as well as 1-dimensional $\Delta$-complexes, are 1-dimensional when regarded as multi-complexes. Furthermore, any strict relation in a 1-dimensional multi-complex $C$ must be of the form $A_i\prec A_j$ where $A_i$ consists of a ``vertex'' $A_i=\{k\}$. For a multi-complex $C$ of dimension at most 1, we define the ``edge'' set of $C$ to be the family $E(C)=\{A_i\}_{i\in U}$ consisting of those $A_i$ of dimension 1; equivalently, $A_i$ such that $A_i\neq \{k\}$ for all $k$ in the base set of $C$. For notational purposes, we allow here the case of dimension zero, when $E(C)=\emptyset$ (evidently, the multi-complexes of dimenison 0 reduce to plain sets). 

Let $E$ be a subfamily of $E(C)=\{A_i\}_{i\in U}$, so $E=\{A_i\}_{i\in F}$ with $F\subseteq U$. Then the family 
\[
n_C \cup \{A_i\}_{i\in U-F}
\]
forms a sub-multi-complex of $C$, which we denote simply by $C-E$. Indeed, this is the case due to the fact that there are no order relations between any such $A_i$ for $i\in U$ (so for $A_i$ in $E(C)$). Furthermore, we have $C-E\preceq C$ (they are based on the same set), and every sub-multi-complex of $C$ which is based on $n_C$ is of this form. 

It is also easy to note that sub-multi-complexes of a multi-complex of dimension $\leq 1$ (or more generally, dimension $d$) have dimension $\leq 1$ (respectively $\leq d$). Finally, let us note that the poset of sub-multi-complexes $\{D\mid D\preceq C\}$ is isomorphic to the poset $\mathcal{P}(U)$, the power set of $U$. Since in this case, for subsets $F_1\subseteq F_2\subseteq U$, the M\"obius function is given by $\mu(F_1,F_2)=(-1)^{|F_2-F_1|}$, we have that for any two sub-multi-complexes $C-E\subseteq C-E'$ (with $E'$ a subfamily of $E$), the M\"obius function $\mu_C$ is given by
$$\mu_C(C-E,C-E')=(-1)^{|E-E'|}.$$
We therefore have the following theorem, which summarizes all the results of the previous section for the case of multi-complexes of dimension 1. Its proof is obvious and is based on the above observations.

\begin{mythm}\label{theorem: multi-complex of dimension 1}
Let $\mathcal{U}$ be the set of multi-complexes $C$ of dimension at most 1, and let $H_{\mathcal{U}}$ be the Hopf subalgebra of $H_{\mathcal{C}}$ spanned by all multi-complexes $C$ of dimension at most 1. Then: 
\begin{enumerate}
    \item 
    For each $C$ in $\mathcal{U}$, we have  
    $$P_C=\sum\limits_{E\subseteq E(C)}(-1)^{|E|}C$$
    \item 
    For $C,D$ in $\mathcal{U}$ we have $P_{C\sqcup D}=P_C\cdot P_D$ in $H_\mathcal{U}$, and the collection $\{P_C \mid C=\,{\rm connected}\}$ forms a linear basis in the space of primitive elements of $H_\mathcal{U}$.
    \item 
    Each $C$ in $\mathcal{U}$ can be expressed in terms of the $P_C$'s as 
        $$C=\sum\limits_{E\subseteq E(C)}P_C$$
    \item 
    Each $C$ is a polynomial with non-negative integer coefficients of the elements $\{P_D\}$ for $D\preceq C$. 
\end{enumerate}
\end{mythm}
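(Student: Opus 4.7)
The plan is to derive all four parts by specializing the general results of Section 4 to the Boolean-lattice structure that governs sub-multi-complexes in dimension at most 1. The key preliminary observation is a bijection between sub-multi-complexes $D\preceq C$ (i.e., with $n_D=n_C$) and subfamilies $E\subseteq E(C)$, given by $E\mapsto C-E$. In dimension $\leq 1$ the only strict relations are $\{k\}\prec A_i$ with $A_i\in E(C)$, so removing any subfamily of edges yields a valid sub-multi-complex; conversely any $D\preceq C$ must contain all singletons and so is determined by its retained edges. Under this bijection the poset $(X_C,\preceq)$ is anti-isomorphic to $(\mathcal{P}(E(C)),\subseteq)$, and intervals $[C-E,C-E']$ with $E'\subseteq E$ correspond to Boolean-lattice intervals $[E',E]$; from the standard formula $\mu(E',E)=(-1)^{|E-E'|}$ I would then recover $\mu_P(C-E,C)=(-1)^{|E|}$, as recorded in the paragraph preceding the theorem.

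With this bijection and Möbius computation in hand, parts (1) and (3) are immediate by substitution into \eqref{eq: P_C} and into the identity of Proposition \ref{proposition: inversion P}, rewriting each sum over $D\preceq C$ as a sum over $E\subseteq E(C)$. (I read the summands stated in (1) and (3) as containing typos, so I would write $P_C=\sum_{E\subseteq E(C)}(-1)^{|E|}(C-E)$ and $C=\sum_{E\subseteq E(C)}P_{C-E}$ respectively.)

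For part (2), the multiplicativity $P_{C\sqcup D}=P_C\cdot P_D$ is already given for all multi-complexes by Proposition \ref{proposition: P_G in gen}. To obtain the basis statement inside $H_\mathcal{U}$, I would first check that $\mathcal{U}$ is closed under disjoint union and restriction (dimension is additive under disjoint union and inherited by sub-multi-complexes), so that Lemma \ref{lemma: sub Hopf} realizes $H_\mathcal{U}$ as a Hopf subalgebra of $H_\mathcal{C}$. Then the argument of Theorem \ref{thm: main theorem P} goes through verbatim within $H_\mathcal{U}$, using that $P_C\in H_\mathcal{U}$ whenever $\dim(C)\leq 1$; this yields that $\{P_C\mid C\text{ connected},\ \dim(C)\leq 1\}$ is a basis for the primitives of $H_\mathcal{U}$. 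Part (4) is then Corollary \ref{corollary: primitive polynomial multi-complex} applied within $H_\mathcal{U}$.

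The only real obstacle is careful bookkeeping of the poset anti-isomorphism: one must track that $E$ records the \emph{removed} edges and that the direction of $\preceq$ in $X_C$ is opposite to $\subseteq$ on $\mathcal{P}(E(C))$. Once this convention is fixed, the signs $(-1)^{|E|}$ in part (1) arise directly from the standard Möbius function of the Boolean lattice, and the remaining parts reduce to transcriptions of the Section 4 results.
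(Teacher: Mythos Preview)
Your proposal is correct and follows essentially the same approach as the paper: the paper's own proof consists of the single sentence ``Its proof is obvious and is based on the above observations,'' where ``the above observations'' are precisely the Boolean-lattice bijection $E\mapsto C-E$ and the M\"obius computation $\mu_C(C-E,C-E')=(-1)^{|E-E'|}$ that you spell out, after which parts (1)--(4) are specializations of Proposition~\ref{proposition: P_G in gen}, Proposition~\ref{proposition: inversion P}, Theorem~\ref{thm: main theorem P}, and Corollary~\ref{corollary: primitive polynomial multi-complex}. Your reading of the summands in (1) and (3) as typos for $(C-E)$ and $P_{C-E}$ is also correct.
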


We note that this theorem recovers, in particular, formulas obtained before for graphs by Aguiar and Mahajan in \cite[\S 9.4]{aguiar2013hopf}.

We give here a few examples to illustrate the above formulas on primitive elements. For this, we consider the special case of multi-complexes coming from graphs. As mentioned earlier, a graph $G$ uniquely determines a multi-complex $C_G$, and conversely if $C_G=C_H$, then $G=H$ (up to isomorphism). Let $\mathcal{C}_\mathcal{G}$ be the set of isomorphism classes of multi-complexes of the form $C_G$. Then, there is a natural one-to-one correspondence between $\mathcal{C}_\mathcal{G}$ and the set $\mathcal{G}$ of isomorphism classes of graphs. In what follows, we will identify $\mathcal{C}_\mathcal{G}$ and $\mathcal{G}$. 

\begin{myeg}\label{example: k3-1}
		Let $G=K_3$. Then we have, 
		\[
		P_G=G - 3~(\raisebox{0pt}{\includegraphics[width=0.03\linewidth, valign=c]{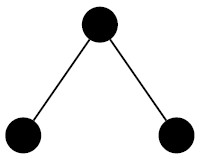}}) + 3~(\raisebox{0pt}{\includegraphics[width=0.008\linewidth, valign=c]{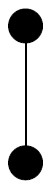}})(\raisebox{0pt}{\includegraphics[width=0.008\linewidth, valign=c]{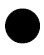}}) -(\raisebox{0pt}{\includegraphics[width=0.008\linewidth, valign=c]{w44}})^3.
		\]
	\end{myeg}

	\begin{myeg}\label{example: wedge}
		Let $G=\raisebox{0pt}{\includegraphics[width=0.05\linewidth, valign=c]{w11}}$\\
		\vspace{0.2cm}
		
		Then we have the following. 
		
		\[\raisebox{0pt}{\includegraphics[width=0.8\linewidth, valign=c]{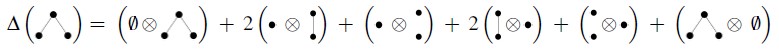}}\]
		
		Let $G_1=\raisebox{0pt}{\includegraphics[width=0.01\linewidth, valign=c]{w22}}$\hspace{0.2cm} and $G_2=\raisebox{0pt}{\includegraphics[width=0.007\linewidth, valign=c]{w44}}$. Then we have that
		\[
		P_G=G - 2G_1G_2+G_1^3.
		\]
		Furthermore, we have that 
		\[
		\Delta(G)=\emptyset \otimes G +2G_2 \otimes G_1 + G_2 \otimes G_2^2 +2G_1 \otimes G_2 + G_2^2\otimes G_2 +G\otimes \emptyset.
		\]
		\[
		\Delta(G_1G_2)=\emptyset \otimes G_1G_2 +2G_2 \otimes G_2^2 + G_2 \otimes G_1+2G_2^2\otimes G_2 +G_1 \otimes G_2 + G_1G_2 \otimes \emptyset.
		\]
		\[
		\Delta(G_2^3)=\emptyset \otimes G_2^3 + 3 G_2 \otimes G_2^2 + 3G_2^2\otimes G_2 + G_2^3 \otimes \emptyset.
		\]
		Hence, we have 
		\[
		\Delta(P_G)=\Delta(G) -2\Delta(G_1G_2) +\Delta(G_2^3)
		\]
		\[
		=\emptyset \otimes P_G + P_G \otimes \emptyset,  
		\]
		showing that $P_G$ is a primitive element. Furthermore, in this case, we have
		\[
		P_{G_1}=G_1 - G_2^2, \quad P_{G_2}=G_2. 
		\]
		It follows from Theorem \ref{theorem: multi-complex of dimension 1} that
		\[
		G=P_G+2P_{G_1}P_{G_2}+P_{G_2}^3. 
		\]
		In fact, we have that
		\[
		P_G +2P_{G_1}P_{G_2}+P_{G_2}^3 =(G -2G_1G_2+G_2^3)+2(G_1-G_2^2)G_2+G_2^3
		\]
		\[
		=G -2G_1G_2+G_2^3+2G_1G_2-2G_2^3+G_2^3=G.
		\]
	\end{myeg}
	
\begin{myeg}
    We consider here also an example coming from multigraphs. Let $G$ be the multigraph 
    \vspace{.5cm}
    $$\xymatrix{\bullet \ar@(dl,ul)@{-}[] \ar@/^{.5pc}/@{-}[r]\ar@/_{.5pc}/@{-}[r] & \bullet}$$
\vspace{.5cm}
 Then \begin{eqnarray*}P_G & = & \left(\,\,\,\,\,\,\,\,\,\,\,\,\,\, \xymatrix{\bullet \ar@(dl,ul)@{-}[] \ar@/^{.5pc}/@{-}[r]\ar@/_{.5pc}/@{-}[r] & \bullet } \right) - \left( \xymatrix{\bullet  \ar@/^{.5pc}/@{-}[r]\ar@/_{.5pc}/@{-}[r] & \bullet} \right)
 - 2\left(\,\,\,\,\,\,\,\,\,\,\,\,\,\, \xymatrix{\bullet \ar@(dl,ul)@{-}[] \ar@{-}[r] & \bullet} \right) \\ & & \\
& &  + \left(\,\,\,\,\,\,\,\,\,\,\,\,\,\, \xymatrix{\bullet \ar@(dl,ul)@{-}[]  & \bullet} \right) 
 + 2\left( \xymatrix{\bullet  \ar@/^{.5pc}/@{-}[r]\ar@/_{.5pc}/@{-}[r] & \bullet} \right)
 -\left( \xymatrix{\bullet  & \bullet} \right)
 \end{eqnarray*}
\end{myeg}

\begin{rmk}
When we specialize the Hopf algebra of multi-complexes $H_\mathcal{E}$ to any of the following specific classes $\mathcal{E}$ of combinatorial objects -  graphs, hypergraphs, multigraphs, simplicial complexes etc.  - the sub-multi-complexes of a multi-complex $C_X$ that corresponds to an object $X$ in $\mathcal{E}$ (as shown in our examples for each class) bijectively correspond to sub-objects of $X$ in $\mathcal{E}$. Furthermore, as noted before, the notion of induced sub-complex specializes to induced subobject in all of these classes. Hence, our previous formulas for the bases of primitive elements as well as the grouping and canellation free antipode formulas will carry over to all of these Hopf subalgebras $H_\mathcal{E}$ of $H_\mathcal{C}$.   
\end{rmk}

\section{Applications}\label{section: Applications}

In this section, we let $\mathcal{C}$ be the set of isomorphism classes of multi-complexes, and $\mathcal{T} \subseteq \mathcal{C}$ be the set of isomorphism classes of connected multi-complexes as before. We list several applications of our explicit description of the basis $\{P_C\}_{C \in \mathcal{T}}$. 
In the first subsection, we provide the cancellation and grouping-free antipode formula for the Hopf algebra of multi-complexes, and specialize the result to the graph Hopf algebra. 
In the second subsection, we prove that our basis $\{P_C\}_{C \in \mathcal{T}}$ is a minimal basis for the space of primitives in a certain sense, and show that $\{P_C\}_{C \in \mathcal{T}}$ satisfies a kind of universal property. Finally, in the third subsection, we list some applications to the graph reconstruction conjectures. We expect that a similar argument can be used for special cases of other reconstruction conjectures. 

\subsection*{Cancellation and grouping-free formulas} 

We first use our previous results to derive the antipode formula for $H_\mathcal{C}$. For a multi-complex $D$, we let  $c_{D}$ to be the number of connected components of $D$.  Let $C$ be a multi-complex and $D \preceq C$. 

\begin{pro}\label{proposition: antipode}
		The antipode $S(C)$ of $C\in H_{\mathcal{C}}$ can be computed as follows:
		\begin{equation}\label{eq: antipode}
		S(C)=\sum_{D \preceq C} (-1)^{c_{D}}P_{D}
		\end{equation}
	\end{pro}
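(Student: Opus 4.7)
The plan is to derive the antipode formula directly from the primitive basis expansion in Proposition \ref{proposition: inversion P} by using multiplicativity of the antipode together with the action of $S$ on primitives.

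First I would recall two standard facts about the antipode of a commutative Hopf algebra. Since $H_\mathcal{C}$ is commutative, the antipode $S$ is an algebra homomorphism (the antipode is always an anti-homomorphism, which coincides with a homomorphism in the commutative case). Moreover, for any primitive element $p \in H_\mathcal{C}$ of positive degree, the defining identity $m \circ (S \otimes \id) \circ \Delta = \eta \circ \varepsilon$ applied to $p$ gives $S(p) + p = 0$, so $S(p) = -p$.

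Next, I would combine these with our multiplicativity result for the primitive elements. Given a multi-complex $D$ with connected components $D_1, \ldots, D_{c_D}$, Proposition \ref{proposition: P_G in gen} gives $P_D = P_{D_1} \cdot P_{D_2} \cdots P_{D_{c_D}}$, and each $P_{D_i}$ is primitive by Proposition \ref{proposition: P_C primitive}. Applying the algebra homomorphism $S$ and using $S(P_{D_i}) = -P_{D_i}$ yields
\[
S(P_D) = \prod_{i=1}^{c_D} S(P_{D_i}) = (-1)^{c_D} \prod_{i=1}^{c_D} P_{D_i} = (-1)^{c_D} P_D.
\]

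Finally, applying the linear map $S$ to the identity $C = \sum_{D \preceq C} P_D$ from Proposition \ref{proposition: inversion P} and substituting the formula for $S(P_D)$ gives
\[
S(C) = \sum_{D \preceq C} S(P_D) = \sum_{D \preceq C} (-1)^{c_D} P_D,
\]
which is the desired formula. There is no genuine obstacle here; the only subtlety is ensuring that $S$ is multiplicative (not merely anti-multiplicative), which is guaranteed by commutativity of $H_\mathcal{C}$, and noting that the summand corresponding to $D = \emptyset$ (if considered) contributes $(-1)^0 P_\emptyset$ consistently with the convention that $\emptyset$ has zero connected components and $P_\emptyset = \emptyset = 1$.
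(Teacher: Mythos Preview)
Your proof is correct and follows essentially the same approach as the paper's own proof: apply $S$ to the expansion $C=\sum_{D\preceq C}P_D$ from Proposition~\ref{proposition: inversion P}, factor $P_D=P_{D_1}\cdots P_{D_{c_D}}$ via Proposition~\ref{proposition: P_G in gen}, and use that $S$ sends each primitive $P_{D_i}$ to $-P_{D_i}$. The only minor difference is that you make explicit the reason $S$ is multiplicative (commutativity of $H_\mathcal{C}$), whereas the paper applies $S$ to the product directly without comment.
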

	\begin{proof}
		From Proposition \ref{proposition: inversion P}, we have that
		$C=\sum\limits_{D \preceq C}P_{D}$, and by taking the antipode map $S$, we have that
		\begin{equation}\label{eq: antipodeformula}
		S(C)=S(\sum_{D \preceq C}P_{D}) = \sum_{D \preceq C} S(P_{D}).
		\end{equation}
		For each multi-complex $D$, we can uniquely write $D=D_1\cdots D_n$ for $n=c_{D}$. Then we have, 
		\[
		P_{D}=P_{D_1}\cdots P_{D_n}. 
		\]
		In particular, since each $P_{D_i}$ is primitive by Proposition \ref{proposition: P_C primitive}, 
		\begin{equation}
		S(P_{D})=S(P_{D_1}\cdots P_{D_n})=S(P_{D_1}) \cdots S(P_{D_n})=(-P_{D_1})\cdots (-P_{D_n})
		= (-1)^{c_{D}}P_{D}. 
		\end{equation}
		Therefore, we have that
		\[
		\eqref{eq: antipodeformula} = \sum_{D\preceq C} (-1)^{c_{D}}P_{D}.
		\]
\end{proof}




	\begin{myeg}\label{example: antipode}
		Let $G$, $G_1$, and $G_2$ be the graphs as in Example  \ref{example: wedge}. Then, following our formula, we have that
		\begin{align}
		\begin{aligned}
	S(G) &=  2P_{G_1\cdot G_2} - P_{G_2^3} -P_G = 2(G_1\cdot G_2 -G_2^3) - G_2^3 - (G - 2G_1\cdot G_2 + G_1^3) \\
	& =    4G_1\cdot G_2 -G - G_1^3 - 3G_2^3. 
		\end{aligned}
			\end{align}
	\end{myeg}
	
	As Example \ref{example: antipode} shows our antipode formula is not cancellation-free. Similarly, our formulas expressing each multi-complex as a linear combination of primitives are also not cancellation-free. However, we can repackage these to make them cancellation-free. To this end, we need to introduce some notation. Let $C$ and $D$ be multi-complexes. We let $[C:D]$ be the number of injective morphisms from $D$ to $C$, divided by the cardinality of $\textrm{Aut}(D)$. Equivalently, $[C:D]$ is the number of non-equivalent embeddings of $D$ into $C$, that is, the number of sub-multi-complexes of $C$ which are isomorphic to $D$. 
	
	\begin{myeg}
		Let $G_1$ and $G_2$ be graphs. Let $C_{G_1}$ and $C_{G_2}$ be the multi-complexes associated to $G_1$ and $G_2$ respectively. Then one can easily see that $G_1$ is a subgraph of $G_2$ if and only if $C_{G_1}$ is a sub-multi-complex of $C_{G_2}$. Furthermore, $[G_2:G_1]=[C_{G_2}:C_{G_1}]$, where $[G_2:G_1]$ is the number of subgraphs of $G_2$ which are isomorphic to $G_1$.
	\end{myeg}
	
	
	Given a multi-complex $C$ and connected multi-complexes $D_1,D_2,\dots,D_t$, in terms of the multiplication of $H_{\mathcal{C}}$ we can write
	\[
	[C:D_1\cdots D_t]=[C:D_1\sqcup\dots\sqcup D_t],
	\]
i.e., the numbers of sub-multi-complexes of $C$ isomorphic to $D_1\dots D_t=D_1\sqcup\dots\sqcup D_t$. Recall that $H_\mathcal{C}$ is graded, where $\deg(C)=|n_C|$ for each $C \in H_\mathcal{C}$. We can now re-write the formulas for $P_C$ as well as the formula giving $C$ as a polynomial in the primitives $P_D$ as follows
	
	\begin{eqnarray}\label{eq 1}
	P_C & = & \sum_{D \preceq C} \mu_P(D,C)D \\
	& = &   \sum_{\stackrel{[D_1],[D_2],\dots,[D_t]\,{\rm connected}}{\deg(C)=\sum_i \deg(D_i)}} \mu_P(D_1\sqcup \cdots \sqcup D_t,C) [C:D_1\cdots D_t]D_1 \cdots D_t 
	\end{eqnarray}
	
	\noindent where the second sum ranges over all t-uples of {\it isomorphism classes} $[D_1],\dots,[D_t]$ of connected multi-complexes $D_1,\dots,D_t$  such that 
	$
	\deg(C)=\sum_i \deg(D_i)
	$
	or equivalently, such that $C$ and $D_1\cdots D_t$ have the same number of vertices (then, whenever an embedding exists, they will automatically be based on the same set).  Similarly, we obtain 
	
	\begin{eqnarray}\label{eq 2}
	C & = &  \sum_{D \preceq C} P_{D} = \sum_{\stackrel{[D_1],[D_2],\dots,[D_t]\,{\rm connected}}{\deg(C)=\sum_i \deg(D_i)}} [C:D_1\cdots D_t]P_{D_1\cdots D_t} \\
	& = & \sum_{\stackrel{[D_1],[D_2],\dots,[D_t]\,{\rm connected}}{\deg(C)=\sum_i \deg(D_i)}} [C:D_1\cdots D_t]P_{D_1}\cdots P_{D_t}\label{eq: cancellation-free-antipode}
	\end{eqnarray}
	
	The last two sums above again are over isomorphism classes of multi-complexes, and thus are cancellation and grouping free. 
	Now, we deduce the cancellation-free formula for the antipode; using equations (\ref{eq: antipode}) and (\ref{eq: cancellation-free-antipode}), we have
	
	$$S(C)=\sum_{D \preceq C} (-1)^{c_{D}}P_{D} = \sum_{\stackrel{[D_1],[D_2],\dots,[D_t]\,{\rm connected}}{\deg(C)=\sum_i \deg(D_i)}} (-1)^t[C:D_1\cdots D_t]D_1\cdots D_t$$
	
\begin{rmk}\label{remark: cancellation-free graphs}
Specializing the above formulas to multi-complexes of dimension at most $1$, we obtain, in particular, cancellation and grouping free formulas for simple graphs, multigraphs, and hypergraphs. 
\end{rmk}	
	
We remark that this formula expresses the antipode of any multi-complex as a polynomial of the algebraic basis given by connected multi-complexes, and such a formula  is obviously unique (the coefficients are uniquely determined, since $H_{\mathcal{C}}$ is a polynomial algebra in the elements of $\mathcal{T}$). Thus, one can regard the multiplicities $[C:D]=[C:D_1\cdots D_t]$ as having a special meaning in the Hopf algebra of multi-complexes.

\subsection*{Minimal and Universal Properties}	
	
	We now prove that the above defined basis of the space of primitives $\{P_C\}_{C\in \mathcal{T}}$ (where $\mathcal{T}$ is the set of isomorphism class of connected multi-complexes) satisfies a certain minimality and uniqueness property. 
	
	For two multi-complexes $C,D$  we will write, by abuse of notation, $D\subseteq C$ if $D$ is a sub-multi-complex of $C$ \footnote{The notation $D \subseteq C$ differs from $D \preceq C$; the latter means that $D$ is a sub-multi-complex such that $n_D=n_C$.}, equivalently, $[C:D]>0$, and we write $D\subset C$ if $D\subseteq C$ but $D\neq C$. Also, for multi-complexes $D\subset C$, we now let $\langle D \rangle$ be the smallest sub-multi-complex of $C$ such that $n_{\langle D \rangle}=n_C$ (this differs slightly from before). One can easily see that if $D \subset C$, then $\langle D \rangle \subset C$. Finally, we note that with respect to the grading on $H_{\mathcal{C}}$ by the number of elements in the set on which a multi-complex is based, the elements $P_C$ are homogeneous of degree $|n_C|$.

	\begin{mydef}\label{definition: integral basis}
		Let $q=\{q_C\}_{C \in \mathcal{T}}$ be a basis of the space of primitives of $H_\mathcal{C}$, considered as an algebra over a field of characteristic zero. We say that $q=\{q_C\}_{C \in \mathcal{T}}$ is an \emph{integral basis} if for each multi-complex $C=D_1\cdots D_t$ with connected components $D_i$, and $q_C:=q_{D_1}\cdots q_{D_t}$, the following two conditions hold:
		\begin{enumerate}
			\item 
			Each $C \in \mathcal{T}$ can be expressed as $C=f_C(q_D\mid D \subseteq C)$, where $f_C$ is a polynomial with non-negative integers. 
			\item 
			$\mathbb{Z}[q_C]_{C \in \mathcal{T}}$ is a subset of $\mathbb{Z}[C]_{C \in \mathcal{T}}$, that is, each $q_C$ is a polynomial in $\{D\}_{D\in \mathcal{T}}$, with integral coefficients.
		\end{enumerate}
	\end{mydef}
	
	\begin{pro}
		Let $q=\{q_C\}_{C \in \mathcal{T}}$ be an integral basis. Then, for each multi-complex $C$, there exist integers $\alpha_{C,D}$ such that 
		\begin{equation}\label{eq: integral basis}
		C=\sum_{\substack{[D]:\\ D \subseteq C}}\alpha_{C,D}q_D,
		\end{equation}
where the sum runs over isomorphism classes $[D]$ of $D \subseteq C$. 
	\end{pro}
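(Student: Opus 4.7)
The plan is to reduce to the connected case and then extend by multiplicativity, invoking condition (1) of Definition \ref{definition: integral basis}.

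For connected $C\in\mathcal{T}$, condition (1) provides a polynomial expression $C=f_C(q_D\mid D\subseteq C)$ with non-negative integer coefficients in the variables $q_D$ indexed by sub-multi-complexes of $C$. Using the identity $q_{D'}\cdot q_{D''}=q_{D'\sqcup D''}$ (which holds by the multiplicative definition of $q_E$ for a non-connected multi-complex $E$), each monomial of $f_C$ collapses into a single $q_E$ for some multi-complex $E$; collecting terms by the isomorphism class of $E$ yields an expression $C=\sum_{[D]:D\subseteq C}\alpha_{C,D}q_D$ with $\alpha_{C,D}\in\mathbb{Z}_{\geq 0}$.

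For general $C$, I would write $C=C_1\sqcup\cdots\sqcup C_t$ into its connected components, apply the previous step to each $C_i\in\mathcal{T}$, and multiply the resulting expressions:
\[C=\prod_{i=1}^t\Bigl(\sum_{[D_i]:D_i\subseteq C_i}\alpha_{C_i,D_i}q_{D_i}\Bigr)=\sum_{(D_1,\ldots,D_t)}\Bigl(\prod_{i=1}^t\alpha_{C_i,D_i}\Bigr)q_{D_1\sqcup\cdots\sqcup D_t}.\]
By Lemma \ref{lemma: trivial lemma}, every tuple $(D_1,\ldots,D_t)$ with $D_i\subseteq C_i$ yields $D_1\sqcup\cdots\sqcup D_t\subseteq C$, and conversely every $D\subseteq C$ decomposes uniquely in this form via restriction to the vertex sets of the $C_i$'s. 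Regrouping by the isomorphism class of $D$ gives $C=\sum_{[D]:D\subseteq C}\alpha_{C,D}q_D$, where $\alpha_{C,D}$ is a sum of products of non-negative integers, hence non-negative (in particular an integer).

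The most delicate point lies in the connected case, where one must verify that the monomials in $f_C$, when collapsed via $q_{D'}q_{D''}=q_{D'\sqcup D''}$, give $q_E$'s only for $E\subseteq C$ rather than for arbitrary disjoint unions of sub-multi-complexes of $C$. This is built into the reading of condition (1): the polynomial $f_C$ has its variables indexed only by $D\subseteq C$ and its coefficients non-negative integers, from which one directly reads off the support of the resulting linear combination in the $q_D$ basis.
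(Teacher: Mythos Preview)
There is a genuine gap in the connected case, and it is precisely the point you flag in your last paragraph but then dismiss too quickly. Condition~(1) tells you that $C=f_C(q_D\mid D\subseteq C)$ as a \emph{polynomial} in the $q_D$'s; it restricts the \emph{variables} to those $D\subseteq C$, but it says nothing about the monomials. A monomial $q_{D'}q_{D''}$ with $D',D''\subseteq C$ collapses to $q_{D'\sqcup D''}$, and there is no reason whatsoever for the disjoint union $D'\sqcup D''$ to embed in $C$. For a trivial example, take $C$ to be a single vertex: then $q_C^2=q_{C\sqcup C}$ is indexed by a two-vertex complex, which is not a sub-multi-complex of $C$. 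Nothing in the definition of integral basis rules out such higher-degree monomials appearing in $f_C$ (homogeneity of $q_C$ is not assumed), so your claim that the support is ``built into the reading of condition~(1)'' is unjustified. Once the connected case fails, the disconnected case inherits the same problem.

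The paper's proof circumvents this exactly by exploiting primitivity. One first passes to $P_C$ via $P_C=\sum_{D\preceq C}\mu_P(D,C)D$ and substitutes the expressions from condition~(1) (extended multiplicatively to all $D$) to obtain $P_C=g_C(q_D\mid D\subseteq C)$ for some integer polynomial $g_C$. Now the key observation: $H_{\mathcal C}$ is a polynomial Hopf algebra in the primitive generators $\{q_C\}_{C\in\mathcal T}$, and in such a Hopf algebra the space of primitives is exactly the linear span of the generators. Since $P_C$ is primitive, $g_C$ is forced to be \emph{linear}, so no products occur and the support genuinely stays inside $\{D:D\subseteq C\}$. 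Feeding this back into $C=\sum_{D\preceq C}P_D$ gives the desired integer linear combination. The primitivity step is what your argument is missing.
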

	\begin{proof}
	    Note that the condition (1) of an integral basis for $C\in \mathcal{T}$ implies the same condition holds for all $C\in\mathcal{C}$. Also, by the formulas relating $C$ and $P_C$, it is enough to prove that $P_C\in {\rm Span}_{\mathbb{Z}}\{q_D \mid D\subseteq C\}$. The same formulas relating the $P_C$'s with the $C$'s we observe we have a formula of the type 
	    $$P_C=g_C(q_D | D\subseteq C)$$
        for each $C$. Regarding $H_\mathcal{C}$ as a polynomial Hopf algebra in the $q_C$ for $C\in\mathcal{T}$, since $P_C$ is a primitive element, it follows that $g_C$ is a linear polynomial (with integer coefficients). This proves the clam. 

	\end{proof}

\begin{mydef}\label{definition: combinatorial integral basis}
		An integral basis $q=\{q_C\}_{C \in \mathcal{T}}$ is said to be \emph{combinatorial} if for each multi-complex $C$ and each sub-multi-complex $D\preceq C$, there exist positive integers $\gamma_{C,D}$,  such that
		\begin{equation}\label{eq: combinatorial eqeq}
		C=\sum_{ D \preceq C}\gamma_{C,D}q_{D},
		\end{equation}
	\end{mydef}

\begin{rmk}
Among integral bases, there are some  that are ``combinatorial'' in the sense that the formula \eqref{eq: integral basis} can be written as in \eqref{eq: combinatorial eqeq}. By gathering similar terms in this last formula, this is equivalent to saying that all the $\alpha_{C,D}$ coefficients from $C= \sum\limits_{\substack{[D]:\\ D \subseteq C}} \alpha_{C,D}q_D$ have the property that eiher $\alpha_{C,D}=0$ if $\deg(D)<\deg(C)$ (they have different vertex sets) or  satisfy the following inequality otherwise:
\begin{equation}\label{eq: combinatorial formula}
\alpha_{C,D} \geq [C:D]. 
\end{equation}
For instance, the basis $\{P_C\}_{C\in \mathcal{T}}$ trivially satisfies this condition. In fact, this motivated us to use the term ``combinatorial'' since in the case of $\{P_C\}_{C\in \mathcal{T}}$, the coefficient $\alpha_{C,D}$ is the number of sub-multi-complexes of $C$ based on the same vertex set and which are isomorphic to $D$. Hence, an integral basis $\{q_C\}_{C \in \mathcal{T}}$ is combinatorial if there are coefficients $\alpha_{C,D}$ for $D\preceq C$, which are determined up to the isomorphism classes of $C,D$, such that $\alpha_{C,D}\geq [C:D]$ and
$$C=\sum_{\substack{[D]:\\ D \preceq C}}\alpha_{C,D}q_D,$$
where this last sum takes place over isomorphism classes of complexes $[D]$.
\end{rmk}
	
Let $X$ be the set of combinatorial integral bases of the space of primitive elements of $H_\mathcal{C}$. For two elements $q=\{q_C\}$ and $r=\{r_C\}$ of $X$, we denote $q \leq r$ if the coefficients $\alpha_{C,D}$ and $\beta_{C,D}$ in the formulas 	
\[
C=\sum_{\substack{[D]:\\ D \preceq C}}\alpha_{C,D}q_{D}=\sum_{\substack{[D]:\\ D \preceq C}}\beta_{C,D}r_{D},
\]
have the property that $\alpha_{C,D} \leq \beta_{C,D}$ for all $D \preceq C$. One can easily see that this relation defines a partial order on the set $X$. 
Then, clearly the basis $\{P_C\}_{C\in \mathcal{T}}$ is the minimal element in $X$ because of \eqref{eq: combinatorial formula}. Hence, we have the following. 

\begin{pro}
The basis $\{P_C\}_{C\in \mathcal{T}}$ is the minimal element in the set $X$ of combinatorial integral bases of the space of primitive elements of $H_\mathcal{C}$ with the aforementioned partial order.
\end{pro}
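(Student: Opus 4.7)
The plan is to verify two things: first, that $\{P_C\}_{C \in \mathcal{T}}$ actually belongs to the set $X$ of combinatorial integral bases, and second, that it sits below every other element of $X$ in the partial order just defined.

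For the first step, I would assemble the properties already established. Theorem \ref{thm: main theorem P} shows that $\{P_C\}_{C \in \mathcal{T}}$ is a basis of the space of primitives of $H_{\mathcal{C}}$. Condition (1) of Definition \ref{definition: integral basis}, that each $C$ is a polynomial with non-negative integer coefficients in the $P_D$ for $D \subseteq C$, is exactly Corollary \ref{corollary: primitive polynomial multi-complex}. Condition (2), that each $P_C$ is a $\mathbb{Z}$-linear combination of multi-complexes, is immediate from the defining formula $P_C = \sum_{D \preceq C} \mu_P(D, C)\, D$. To see that this integral basis is moreover combinatorial in the sense of Definition \ref{definition: combinatorial integral basis}, I would invoke Proposition \ref{proposition: inversion P}, which yields
\[
C \;=\; \sum_{D \preceq C} P_D \;=\; \sum_{[D]:\, D \preceq C} [C:D]\, P_D,
\]
after grouping sub-multi-complexes with base set $n_C$ by isomorphism class. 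Here one uses that when $\deg(D) = \deg(C)$, every embedding of $D$ into $C$ is necessarily surjective on base sets, so that the number of sub-multi-complexes of $C$ with $n_D = n_C$ isomorphic to $D$ coincides with the multiplicity $[C:D]$. Thus the structure coefficients are $\alpha^{(P)}_{C,D} = [C:D]$, which trivially meet the inequality $\alpha^{(P)}_{C,D} \geq [C:D]$ in \eqref{eq: combinatorial formula}. Hence $\{P_C\}_{C \in \mathcal{T}} \in X$.

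For the second step, let $q = \{q_C\}_{C \in \mathcal{T}} \in X$ with structure coefficients $\beta_{C,D}$ defined by $C = \sum_{[D]:\, D \preceq C} \beta_{C,D}\, q_D$. By the defining inequality \eqref{eq: combinatorial formula} for a combinatorial integral basis, $\beta_{C,D} \geq [C:D]$ for every $D \preceq C$. Combining this with the identity $\alpha^{(P)}_{C,D} = [C:D]$ from the previous step gives $\alpha^{(P)}_{C,D} \leq \beta_{C,D}$ for every pair $(C,D)$, which is precisely the relation $\{P_C\}_{C \in \mathcal{T}} \leq q$ in the partial order on $X$.

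There is no real technical obstacle: the entire content of the proposition reduces to correctly identifying, via Proposition \ref{proposition: inversion P}, the coefficients $\alpha^{(P)}_{C,D}$ of the $P$-basis expansion of $C$ as the multiplicity numbers $[C:D]$. Once this identification is in hand, minimality is a tautological consequence of how ``combinatorial'' was defined.
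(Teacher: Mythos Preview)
Your proposal is correct and follows the same approach as the paper, which simply remarks that minimality is immediate from inequality \eqref{eq: combinatorial formula}. You have in fact supplied considerably more detail than the paper does: you explicitly verify membership of $\{P_C\}_{C\in\mathcal{T}}$ in $X$ and spell out the identification $\alpha^{(P)}_{C,D}=[C:D]$, whereas the paper treats both points as understood from the preceding discussion.
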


Furthermore $\{P_C\}_{C\in \mathcal{T}}$ also satisfies a certain universal property as follows. 

\begin{pro}
Let $\{q_C\}$ be a combinatorial integral basis. Then, $\{P_C\}_{C\in \mathcal{T}}$ can be written as a linear combination of $\{q_C\}$ with non-negative coefficients. 
\end{pro}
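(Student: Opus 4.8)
The plan is to compare the two homogeneous bases $\{P_C\}_{C\in\mathcal{T}}$ and $\{q_C\}_{C\in\mathcal{T}}$ of the space $\mathrm{Prim}(H_{\mathcal{C}})$ of primitive elements, and to show that the matrix expressing each $q_C$ in the $P_D$ is \emph{unitriangular with non-positive off-diagonal entries}; inverting such a matrix by a (finite, degree-wise) Neumann series then produces the desired non-negative expansion of $P_C$ in the $q_D$. First I would fix the grading of $H_{\mathcal{C}}$ by the number of vertices $|n_C|$ and refine it by the size $|C|$ (number of faces), recalling from the remark preceding Theorem \ref{thm: main theorem P} that $D\preceq C$ with $D\neq C$ forces $|D|<|C|$. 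Since both families are homogeneous bases of $\mathrm{Prim}(H_{\mathcal{C}})$ for the vertex grading, there are unique scalars $\beta_{C,D}$ (resp. $\lambda_{C,D}$) with $q_C=\sum_D \beta_{C,D}P_D$ (resp. $P_C=\sum_D \lambda_{C,D}q_D$), the sums ranging over connected $D$ with $|n_D|=|n_C|$; a support argument using that $q_C$ is a $\mathbb{Z}$-combination of sub-multi-complexes $D\preceq C$ (invert $C=\sum_{D\preceq C}\gamma_{C,D}q_D$ triangularly, cf. condition~(2) of an integral basis) shows only $D\preceq C$ occur, and $\beta_{C,C}=1$.

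The key step is to extract a clean recursion for the $\beta_{C,D}$ by projecting onto the indecomposables $Q:=H_{\mathcal{C}}^{+}/(H_{\mathcal{C}}^{+})^{2}$, whose basis is $\{\bar P_E : E\in\mathcal{T}\}$ (the images of the polynomial generators). For \emph{disconnected} $D$, the element $q_D=\prod_i q_{D_i}$ is a product of at least two positive-degree primitives, hence lies in $(H_{\mathcal{C}}^{+})^{2}$ and maps to $0$ in $Q$; for connected $D$ one has $\overline{q_D}=\sum_{E}\beta_{D,E}\bar P_E$. Applying the projection to the two expansions of a connected $C$, namely $C=\sum_{D\preceq C}[C:D]P_D$ (Proposition \ref{proposition: inversion P}, grouped into isomorphism classes) and $C=\sum_{D\preceq C}\gamma_{C,D}q_D$, and equating coefficients of $\bar P_D$ for connected $D\preceq C$, yields
\[
[C:D]=\sum_{\substack{D\preceq D'\preceq C\\ D'\ \mathrm{connected}}} \gamma_{C,D'}\,\beta_{D',D}.
\]
Separating the extreme terms $D'=C$ (giving $\beta_{C,D}$) and $D'=D$ (giving $\gamma_{C,D}$, since $\beta_{D,D}=1$) rearranges this to
\[
\beta_{C,D}=\bigl([C:D]-\gamma_{C,D}\bigr)-\sum_{\substack{D\prec D'\prec C\\ D'\ \mathrm{connected}}}\gamma_{C,D'}\,\beta_{D',D}.
\]

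I would then argue $\beta_{C,D}\le 0$ for connected $D\prec C$ by induction on the size $|C|$. The first summand $[C:D]-\gamma_{C,D}$ is $\le 0$ by the minimality established in the previous proposition (i.e. $\gamma_{C,D}\ge[C:D]$). When there are no intermediate connected complexes $D\prec D'\prec C$, the second sum is empty and one gets the clean identity $\beta_{C,D}=[C:D]-\gamma_{C,D}\le 0$; this is exactly the mechanism visible in the $K_3$/$P_3$ computation. The main obstacle is the intermediate sum in the general case: by induction $\beta_{D',D}\le 0$ and $\gamma_{C,D'}\ge 0$, so that sum is $\le 0$ and its negative is $\ge 0$, pulling against the first summand. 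To control it I would exploit that the $\gamma_{C,D'}$ are \emph{not} free but satisfy the consistency relations forced by the requirement that $\{q_C\}$ be a \emph{multiplicative} primitive basis (Proposition \ref{proposition: P_G in gen}); applying the boxed identity recursively (and comparing with its specialization to the minimal basis $q=P$, where $\gamma_{C,D}=[C:D]$ and $\beta_{D',D}=\delta_{D',D}$) gives the quantitative bound $\gamma_{C,D}-[C:D]\ge \sum_{D\prec D'\prec C}\gamma_{C,D'}\,|\beta_{D',D}|$ needed to close the induction. Once $\beta_{C,D}\le 0$ is secured, writing the transition matrix as $I-N$ with $N\ge 0$ strictly lower-triangular for the size grading (hence nilpotent in each vertex-degree), the inverse is $\Lambda=(I-N)^{-1}=\sum_{k\ge 0}N^{k}$, all of whose entries are non-negative; this gives $P_C=\sum_D \lambda_{C,D}q_D$ with $\lambda_{C,D}\ge 0$, as required.
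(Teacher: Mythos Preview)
Your overall strategy---show that the transition matrix from $\{q_C\}$ to $\{P_C\}$ is $I-N$ with $N$ entrywise non-negative and nilpotent, then invert by a Neumann series---is different from the paper's, which proceeds by a direct induction establishing $P_C=q_C+\sum_{D\prec C}b_{C,D}\,q_D$ with $b_{C,D}\ge 0$ (substituting the inductive hypothesis into $C=P_C+\sum_{D\prec C}[C:D]P_D$ and then comparing with the combinatorial expansion $C=\sum_{D\preceq C}\gamma_{C,D}\,q_D$). Your derivation of the recursion
\[
\beta_{C,D}=\bigl([C:D]-\gamma_{C,D}\bigr)-\sum_{\substack{D\prec D'\prec C\\ D'\ \text{connected}}}\gamma_{C,D'}\,\beta_{D',D}
\]
via projection to indecomposables is correct and elegant, and the base case and the unitriangularity $\beta_{C,C}=1$ are fine.

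The proof breaks, however, exactly where you yourself flag it. You want to conclude $\beta_{C,D}\le 0$ for $D\prec C$ by induction, but the two pieces of the recursion have opposite signs: $[C:D]-\gamma_{C,D}\le 0$, while by the inductive hypothesis $-\sum\gamma_{C,D'}\beta_{D',D}\ge 0$. Your proposed fix is to assert the inequality
\[
\gamma_{C,D}-[C:D]\ \ge\ \sum_{D\prec D'\prec C}\gamma_{C,D'}\,\lvert\beta_{D',D}\rvert,
\]
but under the inductive hypothesis this inequality is \emph{literally} the statement $\beta_{C,D}\le 0$ rewritten; claiming it ``follows by applying the boxed identity recursively'' is circular. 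The appeal to ``consistency relations forced by multiplicativity'' does not help either: for a \emph{connected} $C$ the multiplicativity $q_{C_1\sqcup C_2}=q_{C_1}q_{C_2}$ imposes no relation among the $\gamma_{C,D}$ beyond what you have already used, and the comparison with the special case $q=P$ (where the intermediate sum is empty) gives no quantitative control in the general case. As a concrete warning, if one works in the multigraph subalgebra and sets $q_{M_2}=P_{M_2}-P_{M_1}$, $q_{M_3}=P_{M_3}+P_{M_1}$ (so $\beta_{3,1}=+1>0$), then the combinatorial constraint $\alpha_{C,D}\ge[C:D]$ is satisfied for all $C=M_k$ with $k\le 5$, but fails only at $k=6$; thus the constraints for a \emph{fixed} $C$ do not by themselves force $\beta_{C,D}\le 0$, and any genuine argument must use the hypothesis for \emph{all} $C$ simultaneously---something your induction on $|C|$ cannot see. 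In short, the key inductive step is missing an idea, and the sketch as written does not establish the proposition.
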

\begin{proof}
We prove the following equation by induction on the number $e=|C|$. 
\begin{equation}\label{eq: induction}
P_C = q_C + \sum_{\substack{[D]:\\ D \preceq C}} b_{C,D}q_D, \quad \textrm{for some } b_{C,D} \in \mathbb{N}. 
\end{equation}
When $e=1$, it is clear, since the one point multi-complex $C$ is primitive and we must have $q_C=P_C$. Now, for the induction step, consider the following
\begin{equation}\label{eq: induction step}
C=P_C + \sum_{\substack{[D]:\\ D \prec C}} [C:D] P_D.
\end{equation}
We can replace each $P_D$ with $\{q_C\}$ from \eqref{eq: induction}. In particular, we obtain 
\begin{equation}
C=q_C + \sum_{\substack{[D]:\\ D \preceq C}} c_{C,D} q_D, 
\end{equation}
for some integers $c_{C,D}$. We note that $c_{C,D} \geq [C:D]$ since the formula \eqref{eq: induction} has the term $q_C$. Therefore, we finally obtain 
\[
q_C + \sum_{\substack{[D]:\\ D \subset C}}c_{C,D}P_D = C = P_C + \sum_{\substack{[D]:\\ D \prec C}} [C:D] P_D,
\]
showing that 
\[
P_C = q_C + \sum_{\substack{[D]:\\ D \subset C}} (c_{C,D} -[C:D])P_D. 
\]
Since $(c_{C,D} -[C:D])$ are non-negative integers, this proves our proposition. 
\end{proof}
	
	\subsection*{The reconstruction conjectures}
	
	Let $G$ be a graph. The vertex deck of the $G$ is defined to be the multiset of (isomorphism types of) graphs obtained by removing some subset $X\subseteq V(G)$ and taking the induced graph of $G$ graph $V(G)-X$. The edge deck of $G$ is defined to be the multiset of (isomorphism type of) graphs $(G-E)_{E\subseteq E(G)}$. The following two conjectures are well known in combinatorics:
	
	\vspace{.3cm}
	
	\noindent
	{\bf Conjecture} [Vertex Reconstruction Conjecture]
	{\it If two graphs have the same vertex decks, then they are isomorphic.}
	
	\vspace{.3cm}
	
	\noindent
	{\bf Conjecture} [Edge Reconstruction Conjecture]
	{\it If two graphs have the same edge decks, then they are isomorphic.}
	
	\vspace{.3cm}
	
	The vertex reconstruction conjecture is known to hold for some classes of graphs, such as trees. It is also well-known that the vertex reconstruction conjecture implies the edge reconstruction conjecture (see, for instance, \cite{hemminger1969reconstructing}).   We note now that this also follows as a consequence of our algebraic setup:
	
	\begin{cor}
		The vertex reconstruction conjecture implies the edge reconstruction conjecture. 
	\end{cor}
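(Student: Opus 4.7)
Plan. Assuming the vertex reconstruction conjecture (VRC), I would prove that two graphs $G_1,G_2$ sharing an edge deck are isomorphic by first showing they share a vertex deck; VRC then yields $G_1\cong G_2$. Working inside $H_{\mathcal{C}_{\mathcal{G}}}$, I introduce the Hopf-algebra elements
\[
\phi(G):=\sum_{v\in V(G)}(G-v),\qquad \psi(G):=\sum_{e\in E(G)}(G-e),
\]
which encode, respectively, the vertex and edge decks of $G$. The task reduces to showing that $\psi(G_1)=\psi(G_2)$ implies $\phi(G_1)=\phi(G_2)$, and I proceed by induction on $n=|V(G)|$.

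The key identity is obtained by applying VRC to each element of the edge deck of $G$ and bookkeeping. Since each $G-e$ is known up to isomorphism from $\psi(G)$, VRC delivers its vertex deck $\phi(G-e)$. A case split based on whether a vertex $v$ is incident to the removed edge $e$ (using $(G-e)-v=G-v$ if $v\in e$ and $(G-e)-v=(G-v)-e$ if $v\notin e$) yields
\[
\sum_{e\in E(G)}\phi(G-e) \;=\; \sum_{v\in V(G)}\deg(v)\,(G-v)\;+\;\sum_{v\in V(G)}\psi(G-v).
\]
Equivalently, this is the statement that the bidegree $(n-1,1)$ component of $\Delta(\psi(G))$ equals the right-hand side tensored with the one-vertex graph, where $\Delta$ is the coproduct from \S \ref{Section: Hopf algebra of multi-complexes}. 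The left-hand side is determined by the edge deck of $G$ via VRC. Classical arguments retrieve the degree sequence of $G$ from its edge deck; the inductive hypothesis (the corollary for graphs on at most $n-1$ vertices) then determines each $\psi(G-v)$, hence the sum $\sum_v\psi(G-v)$, by double counting against the $G-e$'s.

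Subtracting the known $\sum_v\psi(G-v)$ isolates $\sum_v\deg(v)(G-v)$, and grouping vertices by their known degree yields $\phi(G)=\sum_v(G-v)$, the vertex-deck sum. Applied to both $G_1$ and $G_2$ this produces $\phi(G_1)=\phi(G_2)$, so the vertex decks coincide as multisets, and VRC concludes $G_1\cong G_2$. The main obstacle will be the double-counting step that produces $\sum_v\psi(G-v)$ purely from the edge deck of $G$ (rather than from each $G-v$ individually), together with the separation of the degree-weighted sum $\sum_v\deg(v)(G-v)$ into its individual isotype contributions; the combinatorial bookkeeping here is the technical heart of the argument, and I expect it to be handled by a careful induction on $n$ aided by the algebraic organization that $H_{\mathcal{C}_{\mathcal{G}}}$ provides.
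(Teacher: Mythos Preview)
Your identity
\[
\sum_{e\in E(G)}\phi(G-e)\;=\;\sum_{v}\deg(v)\,(G-v)\;+\;\sum_{v}\psi(G-v)
\]
is correct, and the left-hand side is indeed determined by the edge deck. The gap is in the next step: you assert that ``double counting against the $G-e$'s'' together with the inductive hypothesis determines $\sum_v\psi(G-v)$ separately. But the only double count available is
\[
\sum_{v}\psi(G-v)\;=\;\sum_{e}\sum_{v\notin e}\bigl((G-e)-v\bigr)\;=\;\sum_{e}\Bigl(\phi(G-e)-\sum_{v\in e}(G-v)\Bigr),
\]
and this simply reproduces your original identity; it does not isolate either summand. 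To compute $\sum_{v\notin e}((G-e)-v)$ for a given $e$ you would need to know which two vertices of $G-e$ were the endpoints of $e$, and that information is not carried by the isomorphism type of $G-e$. The inductive hypothesis (VRC $\Rightarrow$ ERC for graphs on $\le n-1$ vertices) does not help either, since you have no $G-v$ in hand to which to apply it. Thus the two unknowns $\sum_v\deg(v)(G-v)$ and $\sum_v\psi(G-v)$ remain coupled by a single equation, and your plan stalls exactly at the point you flagged as the ``main obstacle''. (There is a secondary issue: even given $\sum_v\deg(v)(G-v)$, extracting $\phi(G)$ by dividing through by $|E(G)|-|E(H)|$ fails for isolated vertices.)

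By contrast, the paper's proof avoids all of this bookkeeping by using the basis $\{P_D\}$ of primitives developed in \S\ref{Section: primitive for multi-complexes}. From the cancellation-free expression $G=\sum_{[D]}[G:D]\,P_{D_1}\cdots P_{D_t}$ (Theorem~\ref{theorem: multi-complex of dimension 1} and equation~\eqref{eq: cancellation-free-antipode}), equality of edge decks forces $[G:D]=[G':D]$ for every \emph{proper} spanning subgraph $D$, so $G-G'=P_G-P_{G'}$ is primitive. But the vertex-deck multiplicities of $G$ are exactly the numbers $[G:H\sqcup\{\mathrm{pt}\}]$, which sit in front of monomials $P_{H_1}\cdots P_{H_k}P_{\{\mathrm{pt}\}}$ of degree $\ge 2$; since a primitive element has no such terms, these coefficients must agree for $G$ and $G'$. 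This is the entire argument: two lines, no induction, and it uses precisely the machinery the paper was built to provide. Your proposal does not engage with the $P_C$ basis at all, which is why it ends up stranded on a combinatorial separation problem that the primitive-element formalism dissolves automatically.
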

	\begin{proof}
		Let $G,G'$ be two graphs. If $G$ and $G'$ have the same edge deck, then the cancellation-free expression of a graph in \eqref{eq 1} and \eqref{eq 2} (Remark \ref{remark: cancellation-free graphs}) shows that $G- G' = P_G - P_{G'}$. The same formula now implies that $G$ and $G'$ have the same vertex deck. Indeed, if not, there exists a graph $H=G-\{v\}$ for some vertex $v$ of $G$ such that the multiplicities of $H$ in the vertex decks of $G$ and $G'$ are different; but these multiplicities are, in this case, equal also to $[G:L]$ and $[G':L]$, respectively, where $L=H\sqcup \{v\}$. Then, using equation $\eqref{eq 2}$ again, we see that $G-G'$ we would have a non0-zero term $([G:L]-[G':L])L$, which has degree $\geq 2$, and thus $G-G'$ would not be primitive (a graph with at least two vertices cannot be a primitive element).
	\end{proof}
	
	We can also prove the following case of the vertex reconstruction conjecture by using the Hopf algebras of graphs. 
	
	\begin{pro}
		The vertex reconstruction conjecture is true for disconnected graphs. 	
	\end{pro}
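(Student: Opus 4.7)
The plan is to combine a Hopf-algebraic form of Kelly's lemma with an iterative peeling of the maximum-order connected components, and then to observe that a graph is determined by the multiset of its connected components. Throughout, write $[G{:}H]_{\mathrm{ind}}$ for the number of vertex subsets $X\subseteq V(G)$ with $G|_X\cong H$, that is, the number of induced subgraphs of $G$ isomorphic to $H$; note that this is distinct from the paper's $[G{:}H]$, which counts all (not necessarily induced) sub-multi-complexes.

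First, the operator $N\colon H_{\mathcal{G}}\to H_{\mathcal{G}}$ defined by $N(G)=\sum_{v\in V(G)}(G-v)$ is obtained from the coproduct by taking $\Delta(G)$ and contracting the right-hand tensor factor against the unique one-vertex graph; hence $N(G)$ is manifestly determined by the vertex deck of $G$. Iterating and dividing by $k!$ yields
\[
\tfrac{1}{k!}N^{k}(G)=\sum_{|Y|=|V(G)|-k}G|_{Y}\quad\text{in }H_{\mathcal{G}},
\]
and the coefficient of the basis element $H$ in this sum is precisely $[G{:}H]_{\mathrm{ind}}$. This recovers Kelly's lemma: the vertex deck of $G$ determines $[G{:}H]_{\mathrm{ind}}$ for every graph $H$ with $|V(H)|<|V(G)|$.

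Next, write the connected components of $G$ as $G_{1},\dots,G_{r}$. Since any connected induced subgraph of $G$ lies within a single component, for every connected $H$ we have $[G{:}H]_{\mathrm{ind}}=\sum_{i=1}^{r}[G_{i}{:}H]_{\mathrm{ind}}$. Let $m=\max_{i}|V(G_{i})|$; because $G$ is disconnected, $m<|V(G)|$, so $m$ is recoverable from the deck as the largest size of a connected $H$ with $[G{:}H]_{\mathrm{ind}}>0$. For such $H$ with $|V(H)|=m$, any size-$m$ connected induced subgraph of some $G_{i}$ forces $|V(G_{i})|=m$ and coincides with $G_{i}$ itself; therefore $[G{:}H]_{\mathrm{ind}}$ equals the number of components of $G$ isomorphic to $H$. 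This identifies the complete multiset of maximum-order components of $G$.

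Finally, write $G=M\sqcup K$, where $M$ is the (now known) disjoint union of the maximum-order components. For every connected $H$ the identity $[K{:}H]_{\mathrm{ind}}=[G{:}H]_{\mathrm{ind}}-[M{:}H]_{\mathrm{ind}}$ determines these induced-subgraph counts in $K$, and the same identification argument applied to $K$ recovers its own maximum-order components; iterating until the residue is empty yields the full multiset of connected components of $G$, which determines $G$ up to isomorphism. The main obstacle is justifying the peeling step: one must verify that at each stage the residue retains both structural features used above (connected induced subgraphs lie in a single component, and a maximum-order connected induced subgraph of a maximum-order component equals that component) and that the required counts in the successive residues remain derivable from the data already obtained, both of which follow cleanly from the disjoint-union form of the residue.
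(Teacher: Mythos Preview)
Your argument is correct and is essentially the classical proof that disconnected graphs are reconstructible: Kelly's lemma gives all induced-subgraph counts below full size, and you then peel off the maximum-order connected components iteratively. The paper proceeds quite differently. It writes $G=G_1\cdots G_k$ and $H=H_1\cdots H_t$ in $H_{\mathcal{G}}$, expresses each connected factor as a polynomial $f_i(P)$ (resp.\ $g_j(P)$) in the primitive basis $\{P_L\}$, and then argues: (i) having the same vertex deck forces $G-H$ to be a primitive element; (ii) since $k,t\geq 2$, the products $f_1\cdots f_k$ and $g_1\cdots g_t$ contain no degree-one monomial in the $P_L$'s. Hence $G-H$, being primitive yet having no linear part, must vanish. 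Your route is more elementary and uses the Hopf structure only cosmetically (to phrase Kelly's lemma); the paper's route is shorter and genuinely exploits the primitive basis built in the main theorems, but it rests on the assertion ``same vertex deck $\Rightarrow G-H$ primitive,'' which, while true, is not immediate---unwinding it via the expansion $G=\sum_{D}[G{:}D]\,P_D$ amounts to showing that the spanning-subgraph counts $[G{:}D]$ are deck-reconstructible for every \emph{disconnected} $D$, itself a Kelly-type inclusion--exclusion. Both arguments also tacitly use that a reconstruction partner $H$ of a disconnected $G$ is itself disconnected, which is easy but worth noting.
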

	\begin{proof}
		Let $G=G_1\cdots G_k$, where $G_i$ are connected components of $G$. Suppose that $H=H_1\cdots H_t$ is a graph with the same vertex deck as $G$. Let $G_i=f_i(P)$, a polynomial of the  primitives $P=\{P_L\}_L$ as in Theorem \ref{theorem: multi-complex of dimension 1}. Similarly, we write $H_i=g_i(P)$. As $G$ and $H$ have the same vertex deck, we know that $G-H$ is a primitive element; but $G-H=f_1(P)\cdots f_k(P)-g_1(P)\cdots g_t(P)$, and as $G$ and $H$ are disconnected, neither of $f_1(P)\cdots f_k(P)$ or $g_1(P)\cdots g_t(P)$ can have a monomial of degree $1$ (nor a constant term), showing that $G-H=0$, and hence $G=H$. 
	\end{proof}

	\bibliography{hopf}\bibliographystyle{alpha}

\end{document}